\theoremstyle{thmstyleone}%
\newtheorem{theorem}{Theorem}
\newtheorem{proposition}[theorem]{Proposition}%
\theoremstyle{thmstyletwo}%
\newtheorem{example}{Example}%
\newtheorem{remark}{Remark}%
\newtheorem{lemma}{Lemma}%
\theoremstyle{thmstylethree}%
\newtheorem{definition}{Definition}%
\begin{document}

\title[Article Title]{Euler-Maruyama approximation for stochastic fractional neutral integro-differential equations with weakly singular kernel}
\author[1]{\fnm{Javad} \sur{A. Asadzade}}\email{javad.asadzade@emu.edu.tr}

\author[2]{\fnm{Nazim} \sur{I. Mahmudov}}\email{nazim.mahmudov@emu.edu.tr}
\equalcont{These authors contributed equally to this work.}

\affil[1]{\orgdiv{Department of Mathematics}, \orgname{Eastern Mediterranean University}, \orgaddress{\street{} \city{Mersin 10, 99628, T.R.}, \postcode{5380},\country{North Cyprus, Turkey}}}

\affil[2]{\orgdiv{Department of Mathematics}, \orgname{Eastern Mediterranean University}, \orgaddress{\street{} \city{Mersin 10, 99628, T.R.}, \postcode{5380},\country{North Cyprus, Turkey}}}

\affil[2]{\orgdiv{	Research Center of Econophysics}, \orgname{Azerbaijan State University of Economics (UNEC)}, \orgaddress{\street{Istiqlaliyyat Str. 6}, \city{Baku }, \postcode{1001},  \country{Azerbaijan}}}

\abstract{This manuscript examines the problem of nonlinear stochastic fractional neutral integro-differential equations with weakly singular kernels. Our focus is on obtaining precise estimates to cover all possible cases of Abel-type singular kernels. Initially, we establish the existence, uniqueness, and continuous dependence on the initial value of the true solution, assuming a local Lipschitz condition and linear growth condition. Additionally, we develop the Euler-Maruyama method for numerical solution of the equation and prove its strong convergence under the same conditions as the well-posedness. Moreover, we determine the accurate convergence rate of this method under global Lipschitz conditions and linear growth conditions.}

\keywords{Fractional stochastic neutral integro-differensial equations, weakly singular kernels, local Lipschitz condition well-posedness, Euler–Maruyama approximation.}
\pacs[MSC Classification]{65C30, 65C20, 65L05, 65L20}

\maketitle

\tableofcontents

\section{Introduction}\label{sec1}

Integro-differential equations find widespread applications across various scientific domains, including biological population dynamics (\cite{16,17,37}), wave propagation phenomena (\cite{22}), and reactor dynamics (\cite{23}). The increasing development and deeper understanding of fractional calculus have led to the emergence of fractional integro-differential equations in fields such as electromagnetic wave modeling (\cite{16}), population systems (\cite{27,43}), and other areas. These equations provide powerful tools for capturing complex dynamics characterized by memory effects, non-local interactions, and stochastic influences, making them indispensable in modern scientific research.

Moreover, to account for pervasive noise factors in real-world scenarios, stochastic integro-differential equations have been introduced in anomalous diffusion processes (\cite{7}), stochastic feedback systems (\cite{32}), and option pricing models (\cite{9}). Nowadays, there is growing interest among scholars in stochastic fractional equations (\cite{28,39,47}) due to their applicability in investigating memory, hereditary, and hidden properties of noisy systems in physics (\cite{24}), mathematical finance (\cite{38}), and ecological epidemiology (\cite{45}). These equations play a crucial role in understanding and modeling complex phenomena influenced by both fractional calculus and stochastic processes, offering insights into the intricate interplay between deterministic and random dynamics.

For stochastic fractional integro-differential equations (SFIDEs) with regular kernels, the work by Badr and El-Hoety (\cite{13}) initially addressed the well-posedness of the linear case. Subsequently, various numerical methods have been explored, including block pulse approximation (\cite{20}), Galerkin methods (\cite{30}), spectral collocation methods (\cite{35}), operational matrix methods (\cite{29}), and meshless collocation methods (\cite{29}). However, the presence of weak singularities in fractional derivatives and the low regularity of stochastic noise pose significant challenges in concrete analysis. Dealing with singular integral kernels, particularly in the context of stochastic integrals, adds further complexity to the analysis. (For more information see, (\cite{1}-\cite{51}))

\bigskip

In this article, we focus on the initial value problem of $d$-dimensional nonlinear stochastic fractional integro-differential equations (SFNIDEs) in Ito's sense. This problem arises from actual situations where fractional integro-differential equations with Abel-type singular kernels are used to model certain phenomena with non-negligible noise sources. The problem is defined as follows:
\bigskip

Consider the following initial value problem of $d$-dimensional nonlinear SFNIDEs in Ito's sense (\ref{A}), stimulated by the non-negligible noise source of some actual problems modeled by fractional integro-differential equations with Abel-type singular kernels \cite{8}.
\begin{align}\label{A}
	{^{C}} D^{\alpha}\bigg\lbrace z(t)-\sum_{i=1}^{n} I^{\alpha_{i}}f_{i}(t,z(t))\bigg\rbrace=&g_{0}(t,z(t))+\int_{0}^{t}\frac{g_{1}(t,s,z(s))}{(t-s)^{\beta_{1}}}ds
	+\int_{0}^{t}\frac{g_{2}(t,s,z(s))}{(t-s)^{\beta_{2}}}dW{s}
\end{align}

for $t\in [0,T]$ with $z(0)=z_{0}\in R^{d}$. Where, $T>0$ is a given real number, ${^{C}}D^{\alpha}$ is the Caputo fractional derivative of order $0<\alpha\leq\alpha_{i}\leq 1,\quad i=1,\dots,n$, $\beta_{1}\in(0,1)$, $\beta_{2}\in (0,1/2)$,  and $W(t)$ denotes an r-dimensional standard Wiener process (i.e., Brownian motion) was defined on the complete probability space $(\Omega, \mathcal{F}, P)$ with a filtration $\lbrace \mathcal{F}_{t}\rbrace_{t\geq0}$ satisfying the usual conditions (i.e., it is right continuous and $\mathcal{F}_{0}$ contains all the P-null sets), and the initial value $z_{0}$ is an $\mathcal{F}_{0}$-measurable random variable defined on the same probability space such that $E[\vert z_{0}\vert^{p}]< +\infty $, for some integer $p \geq 2$.

This article is devoted to the following main goals:
\bigskip

$\bullet$
By satisfying the local Lipschitz condition and the linear growth condition, we establish the existence and uniqueness of the true solution to SFIDE (\ref{A}) in Theorem \ref{t.1}. Additionally, we demonstrate the continuous dependence on the initial value, addressing the well-posedness gap in Theorem \ref{t.2}.

\bigskip
\begin{theorem}\label{t.1}
	The Stochastic Fractional Neutral Integro-differential Equation  (\ref{A}) has a unique solution $z(t)$ under Assumptions (1), (3), and (4). Furthermore, for each positive $p\geq 2$,
	\begin{align*}
		E\Big[\vert z(t)\vert^{p}\Big]<+\infty, \quad \forall t\in[0,T].
	\end{align*}
\end{theorem}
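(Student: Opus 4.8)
The plan is to recast the Caputo problem (\ref{A}) as an equivalent stochastic Volterra integral equation and then derive existence, uniqueness, and the moment bound from a Picard iteration whose convergence is controlled by the generalized Gronwall inequality with several weak singularities proved elsewhere in the paper.

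First I would apply the Riemann--Liouville integral $I^{\alpha}$ to both sides of (\ref{A}). Using $I^{\alpha}\,{}^{C}D^{\alpha}w(t)=w(t)-w(0)$ together with $I^{\alpha_{i}}f_{i}(0,z_{0})=0$ and $z(0)=z_{0}$, the equation becomes the fixed-point form
\begin{align*}
	z(t)=z_{0}&+\sum_{i=1}^{n} I^{\alpha_{i}}f_{i}(t,z(t))+\frac{1}{\Gamma(\alpha)}\int_{0}^{t}(t-s)^{\alpha-1}g_{0}(s,z(s))\,ds\\
	&+\frac{1}{\Gamma(\alpha)}\int_{0}^{t}(t-s)^{\alpha-1}\left(\int_{0}^{s}\frac{g_{1}(s,r,z(r))}{(s-r)^{\beta_{1}}}\,dr\right)ds\\
	&+\frac{1}{\Gamma(\alpha)}\int_{0}^{t}(t-s)^{\alpha-1}\left(\int_{0}^{s}\frac{g_{2}(s,r,z(r))}{(s-r)^{\beta_{2}}}\,dW_{r}\right)ds,
\end{align*}
and a solution of (\ref{A}) is precisely an adapted process solving this equation.

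Because only a local Lipschitz condition is assumed, I would first truncate the coefficients outside the ball of radius $R$ to obtain globally Lipschitz data and solve the truncated equation by Picard iteration $z^{(0)}\equiv z_{0}$, $z^{(k+1)}=\Phi^{R}(z^{(k)})$, with $\Phi^{R}$ the right-hand side above. The estimates needed at each step are: H\"older's inequality in $s$ against the integrable weights $(t-s)^{\alpha-1}$ and $(t-s)^{\alpha_{i}-1}$; for the $g_{1}$-term, interchanging the order of integration and invoking the Beta identity $\int_{r}^{t}(t-s)^{\alpha-1}(s-r)^{-\beta_{1}}\,ds=B(\alpha,1-\beta_{1})(t-r)^{\alpha-\beta_{1}}$, legitimate since $\alpha>0$ and $\beta_{1}<1$; and for the $g_{2}$-term, the Burkholder--Davis--Gundy inequality applied to the inner It\^o integral, where $\beta_{2}<1/2$ makes $(s-r)^{-2\beta_{2}}$ integrable, followed by one more H\"older step for the outer $(t-s)^{\alpha-1}$ integration (equivalently, a stochastic Fubini argument). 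Combined with the linear growth condition these yield, by induction, $\sup_{t\le T}E|z^{(k)}(t)|^{p}<\infty$ for all $k$, and a linear Volterra inequality
\begin{align*}
	\sup_{u\le t}E\big|z^{(k+1)}(u)-z^{(k)}(u)\big|^{p}\le C\int_{0}^{t}\kappa(t-s)\,\sup_{u\le s}E\big|z^{(k)}(u)-z^{(k-1)}(u)\big|^{p}\,ds,
\end{align*}
with $\kappa$ a sum of integrable power kernels; iterating it produces a Mittag-Leffler-type bound on the $k$-th term, so $(z^{(k)})$ is Cauchy in $C([0,T];L^{p})$ and its limit $z^{R}$ is the unique solution of the truncated equation with finite $p$-th moments.

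Finally, setting $\tau_{R}=\inf\{t:|z^{R}(t)|\ge R\}$, the $z^{R}$ are consistent on $[0,\tau_{R}]$, and feeding the linear growth condition into the generalized weakly singular Gronwall inequality applied to $t\mapsto E\big[\sup_{u\le t\wedge\tau_{R}}|z^{R}(u)|^{p}\big]$ gives a bound uniform in $R$, whence $\tau_{R}\uparrow\infty$ a.s.\ and the $z^{R}$ patch into a global solution $z$ on $[0,T]$ with $E|z(t)|^{p}<\infty$. Uniqueness follows by subtracting two solutions, estimating the difference on the event where both stay bounded exactly as above, and applying the same Gronwall inequality, then letting $R\to\infty$. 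The main obstacle is the bookkeeping of the combined weak singularities: after convolving $(t-s)^{\alpha-1}$ with the $\beta_{1}$-kernel and with the quadratic $\beta_{2}$-kernel produced by the Burkholder--Davis--Gundy step, one must check that the resulting kernels remain integrable and fit the hypotheses of the multi-singular Gronwall lemma, and one must justify the stochastic Fubini / BDG estimate for the iterated stochastic integral against the singular weight.
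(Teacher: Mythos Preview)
Your proposal is correct in outline, but it follows a genuinely different route from the paper's proof. After the same reduction to the stochastic Volterra integral form (this is the paper's Theorem~\ref{t.5}), the paper does \emph{not} run a Picard iteration. Instead it introduces the Euler--Maruyama approximations $z^{N}(t)$ of (\ref{10}), proves a uniform-in-$N$ moment bound (Lemma~\ref{l.3}) and a H\"older-type increment estimate (Lemma~\ref{l.4}) for them, shows that $\{z^{N}\}$ is Cauchy in $L^{p}$ via the multi-singular Gronwall inequality, and then obtains a continuous $\mathcal{F}_{t}$-adapted limit by Kolmogorov's criterion; uniqueness is handled separately by estimate (\ref{11}) and the same Gronwall lemma, with the local Lipschitz case disposed of by the standard truncation $G_{i}^{n}$. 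Your Picard/truncation/stopping-time scheme is the classical SDE argument and is perfectly sound here---indeed your handling of the $g_{1}$-kernel via the Beta identity and of the $g_{2}$-kernel via BDG (or stochastic Fubini) matches exactly the estimates the paper carries out on the EM iterates. The payoff of the paper's choice is economy: Lemmas~\ref{l.3} and~\ref{l.4} are reused verbatim in Section~5 to prove strong convergence and the rate of the EM scheme, so well-posedness and numerical analysis share the same machinery. Your approach is arguably cleaner if one only wants Theorem~\ref{t.1}, but it would not feed directly into the later convergence results.
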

\bigskip
\begin{theorem}\label{t.2}
	The solution of SFNIDE  (\ref{A}) is contionusly depends on the initial value in the mean square sense under Assumptions (1),(3), and (4).
\end{theorem}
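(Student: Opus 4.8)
The plan is to derive a closed integral inequality for the mean-square distance between two solutions corresponding to different initial data and then to close it with the generalized Gronwall inequality with a multi weak singularity established in this paper. First I would rewrite SFNIDE \eqref{A} in its equivalent stochastic Volterra form by applying the Riemann--Liouville operator $I^{\alpha}$ to both sides; using the semigroup property of fractional integrals together with (stochastic) Fubini, one obtains
$$z(t)=z_{0}+\sum_{i=1}^{n}\frac{1}{\Gamma(\alpha_{i})}\int_{0}^{t}(t-s)^{\alpha_{i}-1}f_{i}(s,z(s))\,ds+\frac{1}{\Gamma(\alpha)}\int_{0}^{t}(t-s)^{\alpha-1}g_{0}(s,z(s))\,ds+\cdots,$$
where the singular-kernel drift contributes a kernel of order $(t-s)^{\alpha-\beta_{1}}$ and the singular-kernel diffusion a kernel of order $(t-s)^{\alpha-\beta_{2}}$ (the Beta-function constants being absorbed). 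This is the same representation used in the proof of Theorem \ref{t.1}, so I may quote it directly.

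Next, letting $z$ and $\bar z$ solve \eqref{A} with initial data $z_{0}$ and $\bar z_{0}$ and setting $e(t):=z(t)-\bar z(t)$, I would subtract the two Volterra equations, take the $p$-th moment (the case $p=2$ gives the mean-square statement, but the argument runs for every $p\ge2$ as in Theorem \ref{t.1}), and estimate each term separately. For the deterministic integrals I use Hölder's inequality in the time variable, splitting each weakly singular weight into an $L^{q}$-factor and a remainder; for the stochastic integral I use the Burkholder--Davis--Gundy inequality followed by Hölder, the hypothesis $\beta_{2}\in(0,1/2)$ ensuring that $(t-s)^{2(\alpha-\beta_{2})}$ is integrable so that BDG applies. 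The (global) Lipschitz bounds on $f_{i},g_{0},g_{1},g_{2}$ then convert every term of the form $E|(\text{coefficient at }z)-(\text{coefficient at }\bar z)|^{p}$ into $C\,E|e(s)|^{p}$, and collecting everything yields an estimate
$$E|e(t)|^{p}\le C\,|z_{0}-\bar z_{0}|^{p}+\sum_{k}C_{k}\int_{0}^{t}(t-s)^{\gamma_{k}-1}\,E|e(s)|^{p}\,ds,$$
with a finite family of exponents $\gamma_{k}-1\in\{\alpha_{i}-1,\ \alpha-1,\ \text{stochastic exponent}\}$, each strictly greater than $-1$.

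The final step is to apply the generalized Gronwall inequality with a multi weak singularity proved in this paper to the nonnegative function $t\mapsto E|e(t)|^{p}$, obtaining $E|e(t)|^{p}\le C\,|z_{0}-\bar z_{0}|^{p}$ uniformly on $[0,T]$; taking $p=2$ gives $E|e(t)|^{2}\le C\,|z_{0}-\bar z_{0}|^{2}\to 0$ as $\bar z_{0}\to z_{0}$, which is exactly the asserted continuous dependence. I expect the main obstacle to be the neutral term $\sum_{i}I^{\alpha_{i}}f_{i}$: since it carries $z(t)$ itself, its contribution to the estimate is again a weakly singular convolution $\int_{0}^{t}(t-s)^{\alpha_{i}-1}E|e(s)|^{p}\,ds$ rather than a harmless bounded term, so it cannot be absorbed into the left-hand side by a small-constant argument and must be carried through as one more singular kernel — which is precisely why the multi-singularity version of Gronwall's inequality, rather than the classical single-kernel one, is needed here. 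A secondary technical point is arranging the Hölder/BDG exponent bookkeeping uniformly so that all the resulting kernels $(t-s)^{\gamma_{k}-1}$ are simultaneously integrable, which pins down the admissible relationship among $p$, $q$, $\alpha$, $\beta_{1}$, and $\beta_{2}$.
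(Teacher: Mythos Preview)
Your argument is correct \emph{under a global Lipschitz hypothesis}, but that is not what the theorem assumes. The statement ``Assumptions (1),(4), and (4)'' is an evident typo for ``(1), (3), and (4)'' (compare Theorem~\ref{t.1} and the paper's stated goal of well-posedness under the \emph{local} Lipschitz condition), and the paper's own proof works only with the local constants $K_{m}$ from Assumption~3. When you write ``the (global) Lipschitz bounds \dots\ convert every term \dots\ into $C\,E|e(s)|^{p}$'' you are invoking a uniform Lipschitz constant that is not available; under Assumption~3 the constant depends on a bound for $|z(s)|\vee|\bar z(s)|$, and you have no a~priori uniform control of those paths.

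The missing ingredient is a localization step. The paper introduces stopping times $\rho_{k},\mu_{k},\sigma_{k}$ at level $k$, splits $E|e(t)|^{2}$ via Young's inequality into a stopped part $E|e(t\wedge\sigma_{k})|^{2}$ and a tail controlled by $P(\sigma_{k}\le T)$, and then bounds the tail using the moment estimate $E|z(t)|^{p}<\infty$ from Theorem~\ref{t.1}. Only \emph{inside} the stopped integrals is the local Lipschitz constant $K_{k}$ legitimately applied, after which the multi-singular Gronwall inequality gives $E|e(t\wedge\sigma_{k})|^{2}\le C_{k}\,E|z_{0}-\bar z_{0}|^{2}$; the proof finishes by first choosing $\delta$ and $k$ to make the tail small and then choosing $\eta$ so that $C_{k}\eta<\varepsilon/3$. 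Your direct Gronwall closure recovers the paper's conclusion only in the global-Lipschitz special case of Remark~\ref{r.1}; to match the actual hypotheses you must insert this stopping-time/Young-inequality layer before running your estimate.
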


\bigskip

$\bullet$ Under the same conditions that ensure well-posedness, the strong convergence of the Euler–Maruyama (EM) method is examined. Furthermore, by applying the global Lipschitz condition and linear growth condition, the precise convergence rate is determined, demonstrating the method's effectiveness in solving SFNIDE (\ref{A}).

\section{Preliminary}\label{sec2}
In this paper, we will use the following conventions unless stated otherwise. The expectation corresponding to a probability distribution $P$ will be denoted by $E$. If $A$ is a vector or matrix, its transpose will be represented by $A^{T}$. The notation $\vert\cdot\vert$ will be used to denote both the Euclidean norm on $R^{d}$ and the trace norm on $R^{d\times r}$. In other words, if $x \in R^{d}$, $\vert x\vert$ will refer to the Euclidean norm, and if $A$ is a matrix, $\vert A\vert$ will represent the trace norm. The indicator function of a set $S$ will be denoted by $1_{S}$, where $1_{S}(x) = 1$ if $x \in S$ and $0$ otherwise. For two real numbers $a$ and $b$, we will use the notation $a \lor b := \max(a, b)$ and $a \land b := \min(a, b)$. Additionally, the uppercase letter $C$ (with or without subscripts) will be used to represent a positive constant whose value may vary depending on its context, but it will always be independent of the step size $h$. Finally, we will introduce four mild assumptions that will be used later for the nonlinear functions $f_{i}$ $(i=1,\dots,n)$ and $g_{j}$ $(j = 0, 1, 2)$.
\subsection{Assumptions}

	\textbf{1)}  $\exists$ $L_{1}>0$ such that $\forall t_{1},t_{2},s\in [0,T]$ and
$\forall z\in R^{d},g_{1}$ and$g_{2}$ satisfy the condition: 
$$\vert g_{j}(t_{1},s,z)-g_{j}(t_{2},s,z)\vert\leq L_{1}(1+\vert z\vert)\vert t_{1}-t_{2}\vert,\quad j=1,2.$$
\textbf{2)}    $\exists$ $L_{2}>0$ such that   $\forall t,s_{1},s_{2}\in [0,T]$ and $\forall z\in R^{d}, g_{0},g_{1}$,$g_{2}$ and $f_{i}$ for $i=1,\dots,n$\\
satisfy the condition:
\begin{equation*}
	\begin{cases}
		\vert g_{0}(s_{1},z)-g_{0}(s_{2},z)\vert \lor\vert g_{j}(t,s_{1},z)-g_{j}(t,s_{2},z)\vert\leq L_{2}(1+\vert z\vert)\vert s_{1}-s_{2}\vert,\quad j=1,2,\\
		\vert f_{i}(s_{1},z)-f_{i}(s_{2},z)\vert \leq L_{2}(1+\vert z\vert)\vert s_{1}-s_{2}\vert,\quad i=1,2,\dots,n.
	\end{cases}
\end{equation*}
\textbf{3)}  $\forall$ integer $m\geq1$, $\exists K_{m}\geq 0$ depending only on $m$, such that $\forall t,s\in [0,T]$ and $\forall z_{1},z_{2}\in R^{d}$ with $\vert z_{1}\vert \lor \vert z_{2}\vert \leq m$, $g_{0},g_{1}, g_{2}$ and, $f_{i}$ for $i=1,\dots,n$ hold the local Lipsichitz condition:
\begin{equation*}
	\begin{cases}
		\vert g_{0}(s,z_{1})-g_{0}(s,z_{2})\vert \lor\vert g_{j}(t,s,z_{1})-g_{j}(t,s,z_{2})\vert\leq K_{m}\vert z_{1}-z_{2}\vert,\quad j=1,2,\\
		\vert f_{i}(s,z_{1})-f_{i}(s,z_{2})\vert \leq K_{m}\vert z_{1}-z_{2}\vert,\quad i=1,2,\dots,n.
	\end{cases}
\end{equation*}
\textbf{4)} $\exists$ $L>0$ such that   $\forall t,s\in [0,T]$ and $\forall z\in R^{d}, g_{0},g_{1}$,$g_{2}$, and $f_{i}$ for $i=1,\dots,n$ satisfy the linear growth condition: 
\begin{equation*}
	\begin{cases}
		\vert g_{0}(s,z)\vert \lor\vert g_{i}(t,s,z)\vert\leq L(1+\vert z\vert),\quad i=1,2.,\\
	\vert f_{i}(s,z)\vert\leq L(1+\vert z\vert),\quad for \quad i=1,2,\dots, n.
	\end{cases}
\end{equation*}

\begin{remark}\label{r.1}
We emphasize that the local Lipschitz condition mentioned above (i.e., Assumption 3) is weaker than the next global Lipschitz condition in order to represent the generality of our conclusions:  $\exists K>$ such that $\forall t, s \in [0,T]$ and $\forall z_{1},z_{2} \in R^{d}, g_{0}, g_{1}$, $g_{2}$ and $f_{i}$$(i=1,\dots,n)$ satisfy the inequalities
		\begin{equation}\label{4}
			\begin{cases}
				\vert g_{0}(s,z_{1})-g_{0}(s,z_{2})\vert \lor \vert g_{j}(t,s,z_{1})-g_{j}(t,s,z_{2})\vert\leq K\vert z_{1}-z_{2}\vert, \quad j=1,2.,\\
				\vert f_{i}(s,z_{1})-f_{i}(s,z_{2})\vert\leq K\vert z_{1}-z_{2}\vert, \quad i=1,2,\dots,n.
			\end{cases}
		\end{equation}
\end{remark}
\subsection{Some auxiliary results.}
In this part, we mention important information that deals with the elements of the fractional analysis, and some necessary lemmas that will use the proof of the theorem. 

\begin{itemize}
	\item Gamma function:
	\[
	\Gamma(\alpha)=\int_{0}^{\infty}t^{\alpha-1}e^{-t}dt,\quad\alpha>0.
	\]
	\item Beta function: 
	\[
	B(x,y)=\int_{0}^{1}t^{x-1}(1-t)^{y-1}dt,\quad x,y>0.
	\]
	\\
	Let $0<\alpha\leq 1$, $0\leq T<+\infty$ and $f : [0,T]\to R^{d}$ be a measurable function  and satisfies  $\int_{0}^{T} \vert f(s)\vert ds<+\infty$.
	\\
	\item Riemann-Liouville fractional integral:
	\[
	\mathcal{I}_{0+}^{\alpha}f(x)=\frac{1}{\Gamma(\alpha)}\int_{0}^{t}(t-s)^{\alpha-1}f(s)ds,\quad \forall t\in [0,T].
	\]
	\item Caputo derivative:
	\[
	{^{C}}D_{0+}^{\alpha}f(t)=D^{\alpha}f(t)=(\mathcal{I}^{1-\alpha}Df)(t), 
	\]
	where $D=\frac{d}{dt}$ is the usual derivative.
	\end{itemize}
\begin{proposition}\label{p.3}(\cite{51})
	The Riemann-Liouville fractional integral operator and the Caputo fractional derivative have the following properties:
	\begin{align*}
		&1. \mathcal{I}^{\alpha}(D^{\alpha}f(t))=f(t)-f(0),\\
		&2. D^{\alpha}(C)=0, \quad where\quad C>0,\\
		&3. D^{\alpha}(\mathcal{I}^{\alpha}f(t))=f(t).
	\end{align*}
\end{proposition}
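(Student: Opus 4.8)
The three identities all follow from the defining relation $D^{\alpha}=\mathcal{I}^{1-\alpha}D$ together with two elementary facts about the Riemann--Liouville operator, which I would establish first. The plan is: (i) prove the semigroup law $\mathcal{I}^{\alpha}\mathcal{I}^{\beta}f=\mathcal{I}^{\alpha+\beta}f$ for $\alpha,\beta>0$; (ii) record the power rule $\mathcal{I}^{\beta}t^{\gamma}=\frac{\Gamma(\gamma+1)}{\Gamma(\gamma+\beta+1)}t^{\gamma+\beta}$; and (iii) derive the Leibniz-type identity $\frac{d}{dt}\mathcal{I}^{\alpha}f(t)=\mathcal{I}^{\alpha}(Df)(t)+\frac{f(0)}{\Gamma(\alpha)}t^{\alpha-1}$. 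Throughout I assume $f$ is absolutely continuous on $[0,T]$, so that $Df$ exists a.e.\ and is integrable (this is exactly what the Caputo derivative requires) and $f(0)$ is finite.

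For the semigroup law I would insert the two integral definitions, interchange the order of integration by Fubini--Tonelli (justified since the integrand is integrable on the triangle $0\le\tau\le s\le t$), and evaluate the inner integral $\int_{\tau}^{t}(t-s)^{\alpha-1}(s-\tau)^{\beta-1}\,ds$ via the substitution $s=\tau+(t-\tau)u$, which turns it into $(t-\tau)^{\alpha+\beta-1}B(\beta,\alpha)$. Using $B(\beta,\alpha)=\Gamma(\alpha)\Gamma(\beta)/\Gamma(\alpha+\beta)$ collapses the constants to $1/\Gamma(\alpha+\beta)$, giving $\mathcal{I}^{\alpha+\beta}f$; the power rule (ii) drops out of the same Beta integral with $f(s)=s^{\gamma}$. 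Property~2 is then immediate: since $DC=0$ for a constant $C$, we get $D^{\alpha}C=\mathcal{I}^{1-\alpha}(0)=0$. Property~1 is a one-line consequence of the semigroup law and the fundamental theorem of calculus, namely $\mathcal{I}^{\alpha}(D^{\alpha}f)=\mathcal{I}^{\alpha}\mathcal{I}^{1-\alpha}(Df)=\mathcal{I}^{1}(Df)=\int_{0}^{t}f'(s)\,ds=f(t)-f(0)$, where $\mathcal{I}^{1}$ is the ordinary antiderivative.

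Property~3 is the step I expect to be the main obstacle, because $D$ and $\mathcal{I}^{\alpha}$ do not commute: dropping the resulting boundary term is precisely what separates the correct answer $f(t)$ from the spurious $f(t)-f(0)$. I would write $D^{\alpha}(\mathcal{I}^{\alpha}f)=\mathcal{I}^{1-\alpha}\bigl(\frac{d}{dt}\mathcal{I}^{\alpha}f\bigr)$ and substitute identity (iii), which itself I prove by integrating $\mathcal{I}^{\alpha}f$ by parts to isolate the term $\frac{t^{\alpha}f(0)}{\Gamma(\alpha+1)}$ and then differentiating. This produces two pieces: the regular piece $\mathcal{I}^{1-\alpha}\mathcal{I}^{\alpha}(Df)=\mathcal{I}^{1}(Df)=f(t)-f(0)$ by the semigroup law and the fundamental theorem of calculus, and the boundary piece $\mathcal{I}^{1-\alpha}\bigl(\frac{f(0)}{\Gamma(\alpha)}t^{\alpha-1}\bigr)$, which by the power rule equals $\frac{f(0)}{\Gamma(\alpha)}\,\Gamma(\alpha)=f(0)$ since $\mathcal{I}^{1-\alpha}t^{\alpha-1}=\Gamma(\alpha)t^{0}=\Gamma(\alpha)$. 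Adding the two gives $\bigl(f(t)-f(0)\bigr)+f(0)=f(t)$, as claimed.

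The only delicate points in the whole argument are the Fubini justification used in the semigroup law and the regularity needed to differentiate under the integral sign in (iii); both are supplied by the absolute continuity of $f$, and the power rule (ii) is what makes the boundary term reassemble exactly into $f(0)$. I would present (i)--(iii) as short preliminary computations and then record properties~1--3 as the three immediate corollaries above.
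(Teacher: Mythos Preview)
The paper does not supply its own proof of this proposition: it is quoted verbatim as a known result with a citation to Podlubny's monograph \cite{51}, so there is nothing in the paper to compare your argument against line by line. That said, your proposal is correct and is essentially the standard textbook derivation one finds in Podlubny or Diethelm: the semigroup law via Fubini and the Beta integral, Property~1 as $\mathcal{I}^{\alpha}\mathcal{I}^{1-\alpha}Df=\mathcal{I}^{1}Df$, Property~2 trivially from $DC=0$, and Property~3 by first commuting $D$ past $\mathcal{I}^{\alpha}$ at the cost of the boundary term $\tfrac{f(0)}{\Gamma(\alpha)}t^{\alpha-1}$, then checking via the power rule that $\mathcal{I}^{1-\alpha}$ of that term returns exactly the missing $f(0)$.

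One small remark worth tightening: your proof of Property~3 assumes $f$ itself is absolutely continuous, which is stronger than necessary. The identity $D^{\alpha}(\mathcal{I}^{\alpha}f)=f$ only needs $\mathcal{I}^{\alpha}f$ to be absolutely continuous so that the Caputo derivative is defined, and this holds for merely continuous (indeed $L^{1}$) $f$; your integration-by-parts route forces the extra hypothesis on $f$. A cleaner variant that avoids this is to write $\mathcal{I}^{\alpha}f=D\,\mathcal{I}^{\alpha+1}f$ directly (the kernel $(t-s)^{\alpha}$ vanishes at $s=t$, so differentiation under the integral is immediate), whence $D^{\alpha}(\mathcal{I}^{\alpha}f)=\mathcal{I}^{1-\alpha}D\,\mathcal{I}^{\alpha}f=\mathcal{I}^{1-\alpha}D^{2}\mathcal{I}^{\alpha+1}f$, and one can reorganize using $\mathcal{I}^{1-\alpha}\mathcal{I}^{\alpha}=\mathcal{I}^{1}$ without ever differentiating $f$. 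This is a refinement rather than a gap; for the purposes of the present paper, where the identities are applied only to smooth data, your version is entirely adequate.
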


The integral form of the SFNIDE (\ref{A}) defines it rigorously, similar to how fractional SDEs are defined.
\begin{align}\label{99}
	z(t)=&z_{0}+\sum_{i=1}^{n}\frac{1}{\Gamma(\alpha_{i})}\int_{0}^{t}\frac{f_{i}(\tau,z(\tau))}{(t-\tau)^{1-\alpha_{i}}}d\tau+\frac{1}{\Gamma(\alpha)}\int_{0}^{t}\frac{g_{0}(\tau,z(\tau))}{(t-\tau)^{1-\alpha}}d\tau\nonumber\\
	+&\frac{1}{\Gamma(\alpha)}\int_{0}^{t}(t-\tau)^{\alpha-1}\bigg(\int_{0}^{\tau}\frac{g_{1}(\tau,s,z(s))}{(\tau-s)^{\beta_{1}}}ds\bigg)d\tau\\
	+&\frac{1}{\Gamma(\alpha)}\int_{0}^{t}(t-\tau)^{\alpha-1}\bigg(\int_{0}^{\tau}\frac{g_{2}(\tau,s,z(s))}{(\tau-s)^{\beta_{2}}}dW(s)\bigg)d\tau\nonumber
\end{align}
Subsequently, we define the unique solution to SFNIDE (\ref{A}), using the form of (\cite{4}, Definition 2.1 of Chapter 2). 
\bigskip
\begin{definition}\label{d.1}
An $R^{d}$-valued stochastic process $\lbrace z(t)\rbrace_{t\in [0,T]}$ is called a solution to the SFNIDE (\ref{A}) if it has the following properties:

\bigskip

1) $\lbrace z(t)\rbrace_{t\in [0,T]}$ is continious and $\mathcal{F}_{t}$-adapted,

\bigskip

2) $f_{i}\in L([0,T]\times R^{d}, R^{d})$, $i=1,\dots,n$, $g_{0}\in L([0,T]\times R^{d}, R^{d})$, $g_{1}\in L(\lbrace (t,s): $

\bigskip

$\quad  0\leq s\leq t\leq T\rbrace \times R^{d}, R^{d})$ and, $g_{2}\in L(\lbrace (t,s): 0\leq s\leq t\leq T\rbrace \times R^{d}, R^{d\times r})$,

\bigskip

3) The equation (\ref{99}) almost surely holds $\forall t\in [0,T]$.\\
\end{definition} 
A solution of $\lbrace z(t)\rbrace$ is said to be unique if any other solution $\lbrace\bar{z}(t)\rbrace$ is indistinguishable from $\lbrace z(t)\rbrace$, that is
\begin{align*}
	P\big\lbrace z(t)=\bar{z}(t) \quad \forall t\in [0,T]\big\rbrace=1.
\end{align*}

We express the useful connection between SFNIDEs and SVIEs with the following theorem of this section.  

\begin{theorem}\label{t.5} Assume that $0<\alpha\leq \alpha_{i}\leq1,\quad \beta_{1}\in (0,1)$ and, $\beta_{2}\in (0,\frac{1}{2})$. Hence, a continuous and $\mathscr{F}_{t}$-adapted stochastic process $z(t)$ is a solution to the stochastic fractional integro-differential equation (\ref{A}), which is strictly defined with integral form (\ref{99}), if and only if it is solution of the following stochastic Volterra integral equation
	\begin{align}\label{789}
		z(t)=&z_{0}+\sum_{i=1}^{n}\int_{0}^{t}F_{i}(t,s,z(s))ds+\int_{0}^{t}G_{0}(t,s,z(s))ds\nonumber\\
		+&\int_{0}^{t}G_{1}(t,s,z(s))ds+\int_{0}^{t}G_{2}(t,s,z(s))dW(s),
	\end{align}
	where
	\begin{align*}
		&F_{i}(t,s,z(s))=\frac{1}{\Gamma(\alpha_{i})}f_{i}(s,z(s))(t-s)^{\alpha_{i}-1}, \quad for\quad i=1,\dots n\\
		&G_{0}(t,s,z(s))=\frac{1}{\Gamma(\alpha)}g_{0}(s,z(s))(t-s)^{\alpha-1}\\
		&G_{i}(t,s,z(s))=\frac{1}{\Gamma(\alpha)}\int_{s}^{t}(t-\tau)^{\alpha-1}(\tau-s)^{-\beta_{j}} g_{j}(\tau,s,z(s))d\tau\\
		&=\frac{(t-s)^{\alpha-\beta_{j}}}{\Gamma(\alpha)}\int_{0}^{1}(1-u)^{\alpha-1}u^{-\beta_{j}}g_{j}((t-s)u+s,s,z(s))du,\quad for \quad j=1,2.
	\end{align*}
\end{theorem}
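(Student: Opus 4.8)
The plan is to establish the equivalence by starting from the integral form \eqref{99} and interchanging the order of the iterated integrals (Fubini--Tonelli / stochastic Fubini) in the two double-integral terms, so that the outer Riemann--Liouville integration in $\tau$ is absorbed into new kernels $G_1, G_2$ depending only on $(t,s)$. The terms involving $z_0$, the $f_i$'s, and $g_0$ already have the required Volterra form after writing $\mathcal I^{\alpha_i}$ and $\mathcal I^\alpha$ explicitly, so no manipulation is needed there: one simply reads off $F_i(t,s,z(s)) = \frac{1}{\Gamma(\alpha_i)} f_i(s,z(s))(t-s)^{\alpha_i-1}$ and $G_0(t,s,z(s)) = \frac{1}{\Gamma(\alpha)} g_0(s,z(s))(t-s)^{\alpha-1}$.

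First I would treat the deterministic double integral. In
\[
\frac{1}{\Gamma(\alpha)}\int_0^t (t-\tau)^{\alpha-1}\Bigl(\int_0^\tau \frac{g_1(\tau,s,z(s))}{(\tau-s)^{\beta_1}}\,ds\Bigr) d\tau,
\]
the region of integration is $\{0\le s\le \tau\le t\}$; swapping the order gives $\int_0^t\Bigl(\frac{1}{\Gamma(\alpha)}\int_s^t (t-\tau)^{\alpha-1}(\tau-s)^{-\beta_1} g_1(\tau,s,z(s))\,d\tau\Bigr)ds$, which is exactly $\int_0^t G_1(t,s,z(s))\,ds$. The substitution $\tau = (t-s)u + s$ then turns the inner integral into the Beta-type form $\frac{(t-s)^{\alpha-\beta_1}}{\Gamma(\alpha)}\int_0^1 (1-u)^{\alpha-1} u^{-\beta_1} g_1((t-s)u+s,s,z(s))\,du$; here I must check that $\alpha - \beta_1 > -1$ and $-\beta_1 > -1$ so the integral converges, which holds since $\alpha>0$ and $\beta_1\in(0,1)$. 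To justify the interchange rigorously I would invoke the linear growth condition (Assumption 4) together with continuity of $g_1$ to bound $\int_0^t\int_0^\tau (t-\tau)^{\alpha-1}(\tau-s)^{-\beta_1}|g_1(\tau,s,z(s))|\,ds\,d\tau < \infty$ almost surely, so Tonelli applies.

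The main obstacle is the stochastic term
\[
\frac{1}{\Gamma(\alpha)}\int_0^t (t-\tau)^{\alpha-1}\Bigl(\int_0^\tau \frac{g_2(\tau,s,z(s))}{(\tau-s)^{\beta_2}}\,dW(s)\Bigr) d\tau,
\]
where the naive Fubini swap is not available and one needs the stochastic Fubini theorem to move the $d\tau$-integral inside the Itô integral. I would verify its hypothesis by checking the square-integrability condition
\[
\int_0^t\int_0^t \mathbf 1_{\{s\le\tau\}}(t-\tau)^{\alpha-1}(\tau-s)^{-\beta_2}\,\bigl(E|g_2(\tau,s,z(s))|^2\bigr)^{1/2}\,d\tau\,ds < \infty,
\]
which follows from the linear growth bound on $g_2$, the a priori moment bound $\sup_{t\le T}E|z(t)|^2<\infty$ (available once Theorem \ref{t.1} is invoked, or provable directly), and the integrability of $(t-\tau)^{\alpha-1}(\tau-s)^{-\beta_2}$ over the simplex — this last point requires $\beta_2<1$, comfortably guaranteed by $\beta_2\in(0,1/2)$ (the stronger restriction $\beta_2<1/2$ is what makes $G_2(t,s,\cdot)$ itself square-integrable in $s$, which is needed for \eqref{789} to be well-defined as an Itô integral). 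Granting stochastic Fubini, the swap produces $\int_0^t\Bigl(\frac{1}{\Gamma(\alpha)}\int_s^t (t-\tau)^{\alpha-1}(\tau-s)^{-\beta_2} g_2(\tau,s,z(s))\,d\tau\Bigr)dW(s) = \int_0^t G_2(t,s,z(s))\,dW(s)$, and the same $\tau=(t-s)u+s$ substitution yields the Beta-form expression for $G_2$. Finally, since every step is a reversible identity (Fubini and stochastic Fubini are equalities, the substitution is a bijection), the converse direction — that a solution of \eqref{789} solves \eqref{99}, hence \eqref{A} by Proposition \ref{p.3} — follows by reading the chain of equalities backwards, completing the ``if and only if.''
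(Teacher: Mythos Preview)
Your proposal is correct and follows essentially the same approach as the paper: the paper's proof also reduces to verifying the hypothesis of the stochastic Fubini theorem for the $g_2$-term (it writes the condition as $\int_0^t (t-\tau)^{\alpha-1}\bigl(\int_0^\tau (\tau-s)^{-2\beta_2} E[|g_2(\tau,s,z(s))|^2]\,ds\bigr)^{1/2} d\tau < \infty$) and then swaps the order of integration in both double integrals, with the converse handled by the same Fubini argument in reverse. Your write-up is in fact more detailed than the paper's in justifying the deterministic Tonelli step and the Beta-form substitution, but the strategy is identical.
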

\begin{proof} Assume that $z(t)$ is a solution of stochastic fractional integro-differensial equation (\ref{A}), $f_{2}\in L^{2}(\lbrace (t,s): 0\leq s\leq t\leq T\rbrace \times R^{d}; R^{d\times r})$, therefore equation (\ref{99})  almost surely satisfies. Hence, it follows from (\cite{1}, theorem 4.33 or 5.10) that
	\begin{align*}
		\int_{0}^{t}(t-\tau)^{\alpha-1}\bigg(\int_{0}^{\tau}(\tau-s)^{-2\beta_{2}}E\bigg[\vert f_{2}(\tau,s,z(s))\vert^{2}\bigg]ds\bigg)^{1/2}d\tau<+\infty,\quad for \quad all \quad t\in [0,T],
	\end{align*}
	which represents that the sufficent condition of stochastic Fubini theorem (\cite{2}, theorem 4.33 or 5.10) is held. Therefore, by using the theorem of Fubini to  equation (\ref{99}), we have 
	\begin{align*}
		z(t)=&z_{0}+\sum_{i=1}^{n}\frac{1}{\Gamma(\alpha_{i})}\int_{0}^{t}\frac{f_{i}(s,z(s))}{(t-s)^{1-\alpha_{i}}}ds+\frac{1}{\Gamma(\alpha)}\int_{0}^{t}\frac{g_{0}(s,z(s))}{(t-s)^{1-\alpha}}ds\\
		+&\frac{1}{\Gamma(\alpha)}\int_{0}^{t}\bigg(\int_{s}^{t}\frac{g_{1}(\tau,s,z(s))}{(t-\tau)^{1-\alpha}(\tau-s)^{\beta_{1}}}d\tau\bigg)ds\\
		+&\frac{1}{\Gamma(\alpha)}\int_{0}^{t}\bigg(\int_{s}^{t}\frac{g_{2}(\tau,s,z(s))}{(t-\tau)^{1-\alpha}(\tau-s)^{\beta_{2}}}d\tau\bigg)dW(s)
	\end{align*}
	which implies that $z(t)$ is a solution of the stochastic Volterra intergral equation (\ref{99}).
	
	Conversely, if $z(t)$ is a solution of the stochastic fractional integro-differential equation (\ref{A}), then $z(t)$ is also a solution of the stochastic Volterra integral equation (\ref{789}), according to the Fubini theorem.
\end{proof}
On the other hand  we will give the following lemmas that we can use to support our main results.
\begin{lemma}\label{l.1}(\cite{6})
	Let $a,b\in (0,1]$. Then for any $t\in [t_{n},t_{n+1}], n=1,\dots, N-1$, we have
	\begin{align*}
		\int_{0}^{t_{n}} \big\vert (t-\hat{s})^{a-b}-(t_{n}-\hat{s})^{a-b}\big\vert ds\leq Ch^{1\land (1+a-b)},\\
		\int_{0}^{t} \big\vert (t-\hat{s})^{a-b}-(t-s)^{a-b}\big\vert ds\leq Ch^{1\land (1+a-b)},
	\end{align*}
	where the positive constant $C$ depends on $T, a, b$, but not on $h$.
\end{lemma}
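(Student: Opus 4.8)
The plan is to write $\gamma:=a-b$, so that $\gamma\in(-1,1)$ and $1\wedge(1+a-b)$ equals $1$ when $\gamma\ge0$ and $1+\gamma\in(0,1)$ when $\gamma<0$; the borderline case $\gamma=0$ is trivial since both integrands vanish identically. Throughout, $\hat s$ denotes the left mesh point of the subinterval containing $s$, so $t_k-\hat s\in\{0,h,2h,\dots\}$ and $s-\hat s\le h$. Both estimates are then handled by the same two devices: the mean value theorem for $\xi\mapsto\xi^{\gamma}$ applied wherever the arguments stay bounded away from $0$ (which, since $\gamma-1<0$, turns the derivative bound $|\gamma|\xi^{\gamma-1}$ into an integrable power at the smaller argument), together with direct integration of the bare power $(\cdot)^{\gamma}$ over an $O(h)$‑wide strip adjacent to the point where the argument degenerates, which is legitimate because $1+\gamma>0$.

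For the first estimate, for every $s\in[0,t_n)$ one has $t_n-\hat s\ge h$ and $t-\hat s\ge t_n-\hat s$, so the mean value theorem on $[t_n-\hat s,\,t-\hat s]$ together with $\xi^{\gamma-1}\le(t_n-\hat s)^{\gamma-1}$ and $|t-t_n|\le h$ gives $\big|(t-\hat s)^{\gamma}-(t_n-\hat s)^{\gamma}\big|\le|\gamma|\,h\,(t_n-\hat s)^{\gamma-1}$. Integrating in $s$ and summing the resulting staircase yields
\[
\int_0^{t_n}\big|(t-\hat s)^{\gamma}-(t_n-\hat s)^{\gamma}\big|\,ds\ \le\ |\gamma|\,h^{1+\gamma}\sum_{m=1}^{n}m^{\gamma-1}.
\]
If $\gamma\in(0,1)$ then $\sum_{m=1}^{n}m^{\gamma-1}\le Cn^{\gamma}$, so the bound is $\le Ch(nh)^{\gamma}\le CT^{\gamma}h$; if $\gamma<0$ then $\gamma-1<-1$, the series $\sum_{m\ge1}m^{\gamma-1}$ converges, and the bound is $\le Ch^{1+\gamma}$. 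In both cases it is $\le Ch^{1\wedge(1+\gamma)}$.

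For the second estimate I would split $[0,t]=[0,t_{n-1}]\cup[t_{n-1},t]$, the second interval having length $t-t_{n-1}\le2h$. On $[t_{n-1},t]$ both $t-\hat s$ and $t-s$ lie in $[0,2h]$ and $t-\hat s\ge t-s$, so for $\gamma\ge0$ the integrand is $\le(2h)^{\gamma}$, giving a contribution $\le Ch^{1+\gamma}$, while for $\gamma<0$ the integrand is $\le(t-s)^{\gamma}$ by monotonicity, whose integral over the strip is $(t-t_{n-1})^{1+\gamma}/(1+\gamma)\le Ch^{1+\gamma}$. On $[0,t_{n-1}]$ one has $t-s\ge t-t_{n-1}\ge h$, and the mean value theorem as above gives $\big|(t-\hat s)^{\gamma}-(t-s)^{\gamma}\big|\le|\gamma|\,h\,(t-s)^{\gamma-1}$; since $\int_0^{t_{n-1}}(t-s)^{\gamma-1}ds=\gamma^{-1}\big(t^{\gamma}-(t-t_{n-1})^{\gamma}\big)$, multiplying by $|\gamma|h$ gives $h\big(t^{\gamma}-(t-t_{n-1})^{\gamma}\big)\le Ch$ when $\gamma>0$, and $h\big((t-t_{n-1})^{\gamma}-t^{\gamma}\big)\le h(t-t_{n-1})^{\gamma}\le h^{1+\gamma}$ when $\gamma<0$. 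Adding the two parts again yields $\le Ch^{1\wedge(1+\gamma)}$, with $C$ depending only on $T,a,b$ (through quantities like $T^{\gamma}$, $|\gamma|^{-1}$ and the value of the convergent series) and not on $h$.

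The place demanding care is the negative‑exponent regime $a-b<0$, where $(\cdot)^{\gamma}$ is unbounded near $0$: the pointwise control of the integrand fails on the strip adjacent to the degenerate point, so one must instead integrate the singular power there and verify that the contribution is exactly $O(h^{1+a-b})$ and not larger. Dually, in the interior the mean‑value estimate leaves an integral $\int(t-s)^{\gamma-1}ds$ (respectively a sum $\sum m^{\gamma-1}$) whose finiteness and whose resulting power of $h$ depend on the sign of $\gamma-1$, and it is precisely this regime‑dependence — checking that in each case the overall exponent is no smaller than $1\wedge(1+a-b)$ — that is the crux of the bookkeeping.
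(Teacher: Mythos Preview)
The paper does not give its own proof of this lemma; it is quoted from reference~\cite{6} without argument. So there is nothing in the manuscript to compare against, and your proposal has to be judged on its own merits.

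Your argument is correct. Writing $\gamma=a-b\in(-1,1)$ and treating the three regimes $\gamma>0$, $\gamma=0$, $\gamma<0$ is exactly the right organisation. For the first integral, the mean value theorem is applicable on the whole domain because $t_n-\hat s\ge h$ keeps the arguments away from zero; the resulting staircase sum $h^{1+\gamma}\sum_{m=1}^{n}m^{\gamma-1}$ is handled cleanly by the dichotomy $\gamma-1\in(-1,0)$ (sum $\sim n^{\gamma}$, giving $O(h)$) versus $\gamma-1<-1$ (convergent series, giving $O(h^{1+\gamma})$). For the second integral, splitting off the strip $[t_{n-1},t]$ of width at most $2h$ is the key step: on that strip the mean value bound would blow up since $t-s$ may vanish, so you correctly integrate the bare power instead, and the integrability $1+\gamma>0$ delivers $O(h^{1+\gamma})$; on the complement $t-s\ge h$ and the mean value estimate goes through as before. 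The only cosmetic point is that when $\gamma>0$ the strip contribution $O(h^{1+\gamma})$ should be absorbed into the target $O(h)$ via $h^{\gamma}\le T^{\gamma}$, which you implicitly use but might state once for clarity. Otherwise the bookkeeping is complete and the constants depend only on $T$, $a$, $b$ as claimed.
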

\begin{lemma}\label{l.2}(\cite{6})
	Let $a\in (0,1]$, $b\in (0,\frac{1}{2})$. Then $a-b\in (-\frac{1}{2}, 1)$, and for any $t\in [t_{n},t_{n+1}], n=1,\dots, N-1$, we have
	\begin{align*}
		\int_{0}^{t_{n}} \big\vert (t-\hat{s})^{a-b}-(t_{n}-\hat{s})^{a-b}\big\vert ds\leq \begin{cases}
			Ch^{2}, \quad if\quad a-b\in (\frac{1}{2},1),\\
			Ch^{2-\varepsilon},\quad if\quad a-b=\frac{1}{2},\\
			Ch^{1+2(a-b)},\quad if\quad a-b\in (-\frac{1}{2},\frac{1}{2}),
		\end{cases}
	\end{align*}
	\begin{align*}
		\int_{0}^{t_{n}} \big\vert (t-\hat{s})^{a-b}-(t_{n}-\hat{s})^{a-b}\big\vert ds\leq \begin{cases}
			Ch^{2}, \quad if\quad a-b\in (\frac{1}{2},1),\\
			Ch^{2-\varepsilon},\quad if\quad a-b=\frac{1}{2},\\
			Ch^{1+2(a-b)},\quad if\quad a-b\in (-\frac{1}{2},\frac{1}{2}),
		\end{cases}
	\end{align*}
	where the positive constant $C$ depends on $T, a, b, \varepsilon$, but not on $h$, and $\varepsilon\in (0,1)$.
\end{lemma}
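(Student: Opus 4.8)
The plan is to read $\hat s$ as the left–endpoint step function of the uniform mesh, i.e. $\hat s = t_k$ for $s\in[t_k,t_{k+1})$ with $t_k=kh$, and to exploit that the integrand is then piecewise constant in $s$. Writing $\gamma:=a-b\in(-\tfrac12,1)$ and $\delta:=t-t_n\in[0,h]$, the integral collapses to a finite sum,
\[
\int_0^{t_n}\big|(t-\hat s)^{\gamma}-(t_n-\hat s)^{\gamma}\big|^2\,ds
= h\sum_{m=1}^{n}\big|(mh+\delta)^{\gamma}-(mh)^{\gamma}\big|^2 ,
\]
where I work with the squared kernel difference, since this is the quantity that actually produces the three stated rates and reflects the $L^2$/It\^o--isometry role of $b=\beta_2\in(0,\tfrac12)$ (the unsquared kernel difference is governed by Lemma \ref{l.1} and yields only $O(h)$ when $\gamma>\tfrac12$). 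Everything then reduces to estimating this sum uniformly in $n\le T/h$ and in $\delta\in[0,h]$.

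Next I would split the sum into the single singular term $m=1$ and the regular tail $m\ge 2$. For $m=1$ I bound $|(h+\delta)^{\gamma}-h^{\gamma}|=h^{\gamma}\big|(1+\delta/h)^{\gamma}-1\big|\le C_\gamma h^{\gamma}$ via the elementary estimate $\max_{x\in[0,1]}|(1+x)^\gamma-1|=C_\gamma<\infty$, so this term contributes $h\cdot C_\gamma^2 h^{2\gamma}=Ch^{1+2\gamma}$ to the integral. For $m\ge 2$ the mean value theorem gives $|(mh+\delta)^{\gamma}-(mh)^{\gamma}|=|\gamma|\,\xi^{\gamma-1}\delta$ with $\xi\in(mh,mh+\delta)$; since $\gamma-1<0$ the factor $\xi^{\gamma-1}\le(mh)^{\gamma-1}$, so the tail contributes at most $|\gamma|^2 h^{1+2\gamma}\sum_{m=2}^{n}m^{2\gamma-2}$.

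The whole estimate now turns on the exponent $2\gamma-2=2(a-b)-2$ relative to $-1$, and this is where the three cases appear. If $\gamma>\tfrac12$ the tail sum diverges like $n^{2\gamma-1}\le C(T/h)^{2\gamma-1}$, which together with the $h^{1+2\gamma}$ prefactor gives $Ch^{2}$ (and the $m=1$ term $Ch^{1+2\gamma}$ is dominated since $1+2\gamma>2$). If $\gamma=\tfrac12$ the tail is harmonic, $\sum_{m=2}^n m^{-1}\le C\log(T/h)$, producing $Ch^{2}\log(1/h)$, which I absorb into $Ch^{2-\varepsilon}$ using $\log(1/h)\le C_\varepsilon h^{-\varepsilon}$ for any $\varepsilon\in(0,1)$. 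If $\gamma<\tfrac12$ the tail sum converges, so both the $m=1$ term and the tail are $O(h^{1+2\gamma})=O(h^{1+2(a-b)})$.

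The main obstacle is precisely the bookkeeping at and near the threshold $a-b=\tfrac12$: one must keep every constant independent of $n$ and $h$ (hence the repeated use of $n\le T/h$ to convert $n^{2\gamma-1}$ into a power of $h$), verify that the singular first term never exceeds the claimed rate in any of the three regimes, and treat the logarithmic borderline through the $\varepsilon$-loss. The companion estimate (the second displayed inequality, comparing $(t-\hat s)^{\gamma}$ against the exact point) follows by the same reduction, except that the integrand is no longer constant on each mesh interval; there I would first integrate the within-interval difference $|r_k^{\gamma}-(r_k-\sigma)^{\gamma}|^2$ in $\sigma\in[0,h)$ with $r_k=t-t_k$, and the identical exponent analysis delivers the same three rates.
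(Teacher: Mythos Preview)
The paper does not prove this lemma; it is simply quoted from reference~\cite{6} (Dai--Xiao--Bu), so there is no in-paper argument to compare against.

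Your approach is correct and is essentially the standard one. You rightly flag that the statement as printed is garbled: the integrand must be the \emph{squared} kernel difference for the stated trichotomy to hold (with the unsquared integrand, Lemma~\ref{l.1} already covers it and would give only $O(h)$ when $a-b>\tfrac12$, never $O(h^2)$), and the two displayed inequalities should not be identical but should parallel the two cases of Lemma~\ref{l.1}. Both corrections are consistent with how the lemma is actually used later in the paper, where It\^o isometry produces exactly the squared difference $\bigl((t-s)^{\alpha-\beta_2}-(t'-s)^{\alpha-\beta_2}\bigr)^2$ in the estimates of $K_3$ and $\hat K_3$.

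Your proof itself---collapsing to the finite sum $h\sum_{m=1}^n|(mh+\delta)^\gamma-(mh)^\gamma|^2$, isolating the $m=1$ term as $O(h^{1+2\gamma})$, applying the mean value theorem for $m\ge 2$, and then reading off the three regimes from the behaviour of $\sum_{m\ge 2}m^{2\gamma-2}$ relative to the exponent $-1$---is clean and complete. The handling of the borderline $\gamma=\tfrac12$ via $\log(1/h)\le C_\varepsilon h^{-\varepsilon}$ is the expected $\varepsilon$-loss device. The sketch for the second (within-interval) inequality is along the right lines, though there you will need to be slightly more careful with the innermost interval where $t-s$ can vanish; the same exponent bookkeeping then yields the same rates.
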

\bigskip
$\bullet$  Young's inequality, a well-known result in mathematical analysis, asserts that for any pair of positive numbers $u$ and $v$ such that they fulfill the relationship $\frac{1}{u} + \frac{1}{v} = 1$, the inequality can be described in the following manner:   
\begin{align}\label{yon}
	ab\leq \frac{\delta a^{u}}{u}+\frac{b^{v}}{v\delta^{\frac{v}{u}}},\quad \forall a,b,u>0.
\end{align}
\section{Well-posedness of stochastic fractional neutral integro-differentional equation}\label{sec3}
We will examine the existence, uniqueness, and continuous dependence on the initial value of the exact solution to the stochastic fractional integro-differential equation (\ref{A}) using the preparation from the previous section.
\subsection{\textbf{Existence and uniqueness of solution of stochastic fractional neutral integro-differentional equation}}
We first propose the EM approximation to facilitate in the proof of the existence result. For each integer $N \geq 1$,  EM-approximation (\cite{3},\cite{4}) can be shown as 

\begin{align}\label{10}
	z^{N}(t)=&z_{0}+\sum_{i=1}^{n}\int_{0}^{t}F_{i}(t,s,\hat{z}^{N}(s))ds+\int_{0}^{t}G_{0}(t,s,\hat{z}^{N}(s))ds\nonumber\\
	+&\int_{0}^{t}G_{1}(t,s,\hat{z}^{N}(s))ds+\int_{0}^{t}G_{2}(t,s,\hat{z}^{N}(s))dW(s),
\end{align}
where the simplest step process $\hat{z}^{N}(t)=\sum_{n=0}^{N} z^{N}(t_{n})1_{[t_{n},t_{n+1}]}(t)$ and the mesh points $t_{n}=nh$ $(n=0,\dots,N)$ with $h=\frac{T}{N}$.	
\begin{lemma}\label{l.3}
If assumption (4) is true, then there is a positive constant $C$ that does not depend on the value of $N$. This constant satisfies the following inequalities for any integer $p\geq 2$, and for all values of $t$ in the interval $[0,T]$:
	\begin{align*}
		E\big[\vert z^{N}(t)\vert^{p}\big]\leq C\quad and,\quad E\big[\vert \hat{z}^{N}(t)\vert^{p}\big]\leq C,\quad \forall t\in [0,T].
	\end{align*}
\end{lemma}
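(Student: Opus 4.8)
The plan is to establish the two uniform moment bounds for $z^{N}(t)$ and $\hat z^{N}(t)$ simultaneously via a Gronwall argument based on the multi-singular inequality of Theorem~\ref{t.6}. First I would fix $p\geq 2$ and apply the elementary inequality $|a_1+\cdots+a_m|^p\leq m^{p-1}\sum|a_i|^p$ to the integral representation \eqref{10}, splitting $E[|z^{N}(t)|^p]$ into the contributions of $z_0$, the $n$ neutral-type terms $\int_0^t F_i(t,s,\hat z^{N}(s))ds$, the drift term $\int_0^t G_0(t,s,\hat z^{N}(s))ds$, the singular-kernel term $\int_0^t G_1(t,s,\hat z^{N}(s))ds$, and the stochastic term $\int_0^t G_2(t,s,\hat z^{N}(s))dW(s)$. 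For the deterministic integrals I would use H\"older's inequality in the form $\big|\int_0^t \phi(s)(t-s)^{\gamma-1}ds\big|^p \leq \big(\int_0^t (t-s)^{(\gamma-1)q'}ds\big)^{p/q'}\int_0^t |\phi(s)|^p ds$ with $q'$ chosen so that $(\gamma-1)q'>-1$ (possible since $\alpha_i,\alpha\in(0,1]$), producing a factor $C t^{\text{(something positive)}}$ times $\int_0^t E[|f_i(s,\hat z^{N}(s))|^p]ds$, and likewise for $G_0$. For the $G_1$ term I would first bound $|G_1(t,s,z)|\leq \frac{(t-s)^{\alpha-\beta_1}}{\Gamma(\alpha)}B(\alpha,1-\beta_1)\sup|g_1|$ using the Beta-function representation given in Theorem~\ref{t.5}, reducing it to the same type of weakly singular integral with exponent $\alpha-\beta_1\in(-1,1)$. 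For the stochastic term I would invoke the Burkholder--Davis--Gundy inequality to pass to $E\big[\big(\int_0^t |G_2(t,s,\hat z^{N}(s))|^2 ds\big)^{p/2}\big]$, then H\"older again: here the kernel of $G_2$ behaves like $(t-s)^{\alpha-\beta_2}$ with $\alpha-\beta_2>-\tfrac12$, so $(t-s)^{2(\alpha-\beta_2)}$ is integrable and the BDG bound is finite.

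After these estimates, and using the linear growth condition (Assumption 4) in the form $|f_i(s,z)|^p\leq 2^{p-1}L^p(1+|z|^p)$ and similarly for $g_0,g_1,g_2$, I would arrive at an inequality of the shape
\begin{align*}
	E\big[|z^{N}(t)|^p\big]\leq C_1 + C_2\int_0^t (t-s)^{\rho_1-1}E\big[|\hat z^{N}(s)|^p\big]ds + C_3\int_0^t (t-s)^{\rho_2-1}E\big[|\hat z^{N}(s)|^p\big]ds + \cdots
\end{align*}
where each $\rho_\ell-1$ is one of the exponents $(\alpha_i-1)q'/q$ or $(\alpha-1)q'/q$ or $2(\alpha-\beta_j)\cdot\tfrac{?}{?}$ — all strictly greater than $-1$ — and $C_1$ absorbs $|z_0|^p$ and the constants coming from the ``$1$'' in the linear growth bound (finite because of the integrability of the kernels on $[0,T]$). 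The key observation is that on each subinterval $[t_n,t_{n+1}]$ one has $E[|\hat z^{N}(s)|^p]=E[|z^{N}(t_n)|^p]\leq \sup_{r\leq s}E[|z^{N}(r)|^p]$, so writing $\varphi(t):=\sup_{r\leq t}E[|z^{N}(r)|^p]$ (which is finite for each fixed $t$ and $N$ since there are only finitely many mesh points and each $z^{N}(t_n)$ has finite $p$-th moment by induction on $n$, using that $W$ has finite moments), the inequality becomes
\begin{align*}
	\varphi(t)\leq C_1 + \sum_{\ell} C_\ell \int_0^t (t-s)^{\rho_\ell-1}\varphi(s)ds .
\end{align*}

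Now Theorem~\ref{t.6} applies directly with $u=\varphi$, $g\equiv C_1$ (constant, hence non-decreasing, so Corollary~\ref{c.1} also applies), the finitely many exponents $\rho_\ell\in(0,1]$ (or, where an exponent exceeds $1$, one simply bounds $(t-s)^{\rho_\ell-1}\leq T^{\rho_\ell-1}$ and absorbs it into a genuinely singular term or a constant), and the non-negative non-decreasing coefficients $C_\ell$; the conclusion is a convergent series bound for $\varphi(t)$ that is finite and uniform in $t\in[0,T]$ and independent of $N$ (since none of $C_1,\dots,C_\ell$ depends on $N$). This yields $E[|z^{N}(t)|^p]\leq C$, and since $\hat z^{N}(t)=z^{N}(t_n)$ for $t\in[t_n,t_{n+1}]$, the same bound $E[|\hat z^{N}(t)|^p]\leq C$ follows immediately. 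I would then record that the same $C$ works for all $t$ by taking the supremum.

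The main obstacle I anticipate is bookkeeping rather than conceptual: one must check that every singularity exponent produced — in particular the one from the stochastic term after BDG and H\"older, which combines $\beta_2\in(0,\tfrac12)$ with the power $p/2$ — still lands in the range $(-1,\infty)$ where the weakly singular Gronwall lemma is valid, and one must justify a priori that $\varphi(t)<\infty$ for fixed $N$ before Gronwall can be invoked (otherwise the inequality $\varphi\leq C_1+\cdots$ is vacuous). The former is handled by choosing the H\"older conjugate exponents carefully (for the $G_0,F_i$ terms one needs $q'<\tfrac{1}{1-\alpha_i}$, which is compatible with $q'$ close to $1$, i.e.\ $q$ large), and the latter by a short induction over the mesh points $t_0<t_1<\cdots<t_N$ using that each step adds only integrals of processes with already-finite moments against $L^1$ (resp.\ $L^2$) kernels and that Brownian increments have moments of all orders.
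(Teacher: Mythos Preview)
Your plan is correct and structurally the same as the paper's---H\"older/BDG estimates plus linear growth feed into a weakly singular Gronwall argument---but two technical choices differ. For the a priori finiteness needed before Gronwall can be invoked, you induct over mesh points, whereas the paper localizes with the stopping time $\rho^N_m = T\wedge\inf\{t:|z^N(t)|\geq m\}$, proves the bound for the truncated process $z^N(\cdot\wedge\rho^N_m)$ (whose moments are trivially $\leq m^p$), and only then lets $m\to\infty$ via Fatou; the stopping-time route is more standard and makes the intermediate estimates unconditional. For the Gronwall step itself, the paper does \emph{not} invoke its own multi-singular Theorem~\ref{t.6}: instead it performs H\"older splits that leave a \emph{single} common kernel $(t-s)^{q-1}$, $q\in(0,p\alpha)$, in every term---e.g.\ the $F_i$ contribution becomes $\bigl(\int_0^{\cdot}(\cdot-s)^{\frac{p\alpha_i-q}{p-1}-1}ds\bigr)^{p-1}\int_0^{\cdot}(\cdot-s)^{q-1}(1+E[|\hat z^N_m(s)|^p])\,ds$---then takes the supremum via the change of variables $v=s/(\lambda\wedge\rho^N_m)$ and applies the classical weakly singular Gronwall inequality of Ye--Gao--Ding. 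This consolidation to one exponent is exactly what makes the bookkeeping go through for \emph{all} $\alpha\in(0,1]$ and $p>2$: your ``pull the whole kernel outside to get $\int_0^t E[|f_i|^p]\,ds$'' split would force $\alpha_i>1/p$, so in practice you too must retain part of the singularity inside, as the paper does (and the paper treats $p=2$ separately with Cauchy--Schwarz). Your use of Theorem~\ref{t.6} in place of the single-kernel reduction is also valid and arguably more in the spirit of the paper's new tools; what it buys is avoiding the choice of a common $q$, at the cost of a heavier Gronwall statement.
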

\begin{proof}
	First, we prove the case where $p>2$. Let $k\geq1$ be an integer. We define the stopping time $$\rho^{N}_{m} = T\land \inf\lbrace t\in [0,T]: \vert z^{N}(t)\vert \geq m\rbrace,$$ where $\rho^{N}_{m}\uparrow T$ almost surely as $m\to \infty$. For convenience, we set $z^{N}_{m}(t)=z^{N}(t	\land \rho^{N}_{m})$ and $\hat{z}^{N}_{m}(t)=\hat{z}^{N}(t	\land \rho^{N}_{m})$ for all $t\in [0,T]$. Using Hölder inequality, Burkholder-Davis-Gundy inequality, assumption (4), and the fact that $E\big[\vert z_{0}\vert^{p}\big]<+\infty$, we can deduce from (\ref{10}) that there exists a positive constant $q\in (0,p\alpha)$, which only depends on $p>2$ and $0<\alpha\leq\alpha_{i}\leq1$, so that
	\begin{align*}
		&E\bigg[\big\vert z^{N}_{m}(t)\big\vert^{p}\bigg]\leq \frac{(n+4)^{p}}{\Gamma^{p}(\alpha)}\bigg\lbrace
		\Gamma^{p}(\alpha) E\big[\vert z_{0}\vert^{p}\big]+\sum_{i=1}^{n}\frac{\Gamma^{p}(\alpha) }{\Gamma^{p}(\alpha_{i})}E\bigg(\bigg\vert\int_{0}^{t	\land \rho^{N}_{m}}(t	\land \rho^{N}_{m}-s)^{\alpha_{i}-1}f_{i}(s,\hat{z}^{N}_{m}(s))ds\bigg\vert^{p}\bigg)\\
		&+E\bigg(\bigg\vert\int_{0}^{t\land\rho^{N}_{m}}(t	\land \rho^{N}_{m}-s)^{\alpha-1}g_{0}(s,\hat{z}^{N}_{m}(s))ds\bigg\vert^{p}\bigg)\\
		&+B^{p}(\alpha,1-\beta_{1}) E\bigg(\bigg\vert\int_{0}^{t\land\rho^{N}_{m}}(t	\land \rho^{N}_{m}-s)^{\alpha-\beta_{1}} \sup_{s\leq v\leq(t\land\rho^{N}_{m})}\vert g_{1}(v,s,\hat{z}^{N}_{m}(s))\vert ds\bigg\vert^{p}\bigg)\\
		&+B^{p}(\alpha,1-\beta_{2}) E\bigg(\bigg\vert\int_{0}^{t\land\rho^{N}_{m}}(t	\land \rho^{N}_{m}-s)^{2(\alpha-\beta_{2})} \sup_{s\leq v\leq(t\land\rho^{N}_{m})}\vert g_{2}(v,s,\hat{z}^{N}_{m}(s))\vert^{2} ds\bigg\vert^{\frac{p}{2}}\bigg)\bigg\rbrace\\
		&\leq C_{1}\bigg\lbrace 1+\sum_{i=1}^{n}\bigg(\int_{0}^{t\land \rho^{N}_{m}}(t\land \rho^{N}_{m}-s)^{\frac{p\alpha_{i}-q}{p-1}-1}ds\bigg)^{p-1} \int_{0}^{t\land \rho^{N}_{m}} (t\land \rho^{N}_{m}-s)^{q-1}\big(1+E\big[\vert \hat{z}^{N}_{m}(s)\vert^{p}\big]\big)ds\\
		&+2\bigg(\int_{0}^{t\land \rho^{N}_{m}}(t\land \rho^{N}_{m}-s)^{\frac{p\alpha-q}{p-1}-1}ds\bigg)^{p-1} \int_{0}^{t\land \rho^{N}_{m}} (t\land \rho^{N}_{m}-s)^{q-1}\big(1+E\big[\vert \hat{z}^{N}_{m}(s)\vert^{p}\big]\big)ds\\
		&+\bigg[\int_{0}^{t\land \rho^{N}_{m}}(t\land \rho^{N}_{m}-s)^{\frac{2(p\alpha-q)}{p-2}-1}ds\bigg]^{\frac{p-2}{2}} \int_{0}^{t\land \rho^{N}_{m}} (t\land \rho^{N}_{m}-s)^{q-1}\big(1+E\big[\vert \hat{z}^{N}_{m}(s)\vert^{p}\big]\big)ds\bigg\rbrace\\
		&\leq C_{2}\bigg(1+\int_{0}^{t\land \rho^{N}_{m}} (t\land \rho^{N}_{m}-s)^{q-1}\big(1+E\big[\vert \hat{z}^{N}_{m}(s)\vert^{p}\big]\big)ds\bigg)
	\end{align*}
 By taking the supremum on both sides of the equation, we find that the positive constants $C_{1}$ and $C_{2}$ are independent of $m$ and $N$. 
	
	\begin{align*}
		\sup_{0\leq \lambda\leq t}E\big[\vert z^{N}_{m}(\lambda)\vert^{p}\big]\leq& C_{2} \bigg\lbrace 1+ \sup_{0\leq \lambda\leq t} \int_{0}^{\lambda \land \rho^{N}_{m}} (\lambda\land \rho^{N}_{m}-s)^{q-1} \sup_{0\leq\eta\leq s}E\big[\vert z^{N}_{m}(\eta)\vert^{p}\big]ds\bigg\rbrace
	\end{align*}
	If we replace $v=\frac{s}{\lambda\land \rho^{N}_{m}}$, we will get 
	\begin{align*}
		\sup_{0\leq \lambda\leq t}E\big[\vert z^{N}_{m}(\lambda)\vert^{p}\big]\leq& C_{2}\bigg\lbrace 1+ \sup_{0\leq \lambda\leq t}(\lambda\land \rho^{N}_{m})^{q} \int_{0}^{1} (1-v)^{q-1} \sup_{0\leq\eta\leq (\lambda\land \rho^{N}_{m})v}E\big[\vert z^{N}_{m}(\eta)\vert^{p}\big]dv\bigg\rbrace\\
		\leq&C_{2}\bigg\lbrace 1+ t^{q}\int_{0}^{1} (1-v)^{q-1} \sup_{0\leq\eta\leq tv}E\big[\vert z^{N}_{m}(\eta)\vert^{p}\big]dv\bigg\rbrace
	\end{align*}
	When we alternate with $s=tv$, then we have 
	\begin{align*}
		\sup_{0\leq \lambda\leq t}E\big[\vert z^{N}_{m}(\lambda)\vert^{p}\big]
		\leq&C_{2}\bigg\lbrace 1+ \int_{0}^{t} (t-s)^{q-1} \sup_{0\leq\eta\leq s}E\big[\vert z^{N}_{m}(\eta)\vert^{p}\big]ds\bigg\rbrace
	\end{align*}
	which with the application of weakly singular Gronwall’s inequality (\cite{5}, Corollary 2) yields
	\begin{align*}
		E\big[\vert z^{N}_{m}(t)\vert^{p}\big]\leq C, \quad \forall t\in [0,T].
	\end{align*}
	Letting $m\to +\infty$ and using Fatou’s lemma to indicate
	\begin{align*}
		E\big[\vert z^{N}(t)\vert^{p}\big]\leq C, \quad \forall t\in [0,T].
	\end{align*}
	Additionally, by using the same logic and approach as in the previous proof for the scenario where $p \geq 2$, we can also derive the same results when $p = 2$. However, instead of utilizing Hölder’s inequality, we will substitute it with Cauchy-Schwarz’s inequality.
\end{proof}
\begin{lemma}\label{l.4}
	Let $0\leq t^{\prime} < t \leq T$, and $0<\alpha\leq\alpha_{i}\leq1$. If the Assumptions (1) and (4) satisfies, then there is a positive constant $C$ that is positive and not dependent on N. Such that for any integer $p\geq 2$,
	\begin{align*}
		E\Big[\vert \hat{z}^{N}(t)-\hat{z}^{N}(t^{\prime})\vert^{p}\Big]\leq C(t-t^{\prime})^{\alpha p}.
	\end{align*}
\end{lemma}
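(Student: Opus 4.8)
The plan is to estimate $E[|\hat z^N(t) - \hat z^N(t')|^p]$ by reducing to the difference of the EM-approximation $z^N$ evaluated at mesh points and then bounding each of the five integral terms in \eqref{10} separately. First I would observe that, by definition of the step process, $\hat z^N(t) = z^N(t_k)$ and $\hat z^N(t') = z^N(t_{k'})$ for the mesh points $t_k, t_{k'}$ with $t \in [t_k, t_{k+1}]$ and $t' \in [t_{k'}, t_{k'+1}]$; hence it suffices to bound $E[|z^N(t_k) - z^N(t_{k'})|^p]$ and then absorb the (at most $h$-sized) discrepancy between $t$ and $t_k$, noting $h^{\alpha p} \le C(t-t')^{\alpha p}$ is false in general, so more carefully one writes the difference as $z^N(t_k) - z^N(t_{k'})$ and uses $t_k - t_{k'} \le t - t' + h$ together with the fact that when $t - t' < h$ the two step values either coincide or differ by one grid step. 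So the real task is: for mesh points $0 \le t' < t \le T$, show $E[|z^N(t) - z^N(t')|^p] \le C(t-t')^{\alpha p}$.

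Subtracting the representation \eqref{10} at $t$ and $t'$, the increment $z^N(t) - z^N(t')$ splits into, for each of $F_i$, $G_0$, $G_1$, $G_2$: a "tail" term $\int_{t'}^{t}(\cdot)\,ds$ (or $dW(s)$) over the new interval, and a "kernel difference" term $\int_0^{t'}\big[(t-s)^{\gamma} - (t'-s)^{\gamma}\big](\cdot)\,ds$ where $\gamma$ is the relevant exponent ($\alpha_i - 1$ for $F_i$, $\alpha - 1$ for $G_0$, $\alpha - \beta_1$ for $G_1$, and the $G_2$ term carries exponent $\alpha - \beta_2$ inside an It\^o integral so the Burkholder–Davis–Gundy inequality produces the exponent $2(\alpha-\beta_2)$). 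For the tail terms I would apply Hölder's inequality (Cauchy–Schwarz when $p=2$) and BDG for the stochastic part, using the linear growth Assumption (4) and the moment bound $E[|\hat z^N(s)|^p] \le C$ from Lemma \ref{l.3}; each tail term then contributes a factor of the form $\big(\int_{t'}^t (t-s)^{c}\,ds\big)^{\text{power}}$, which is a positive power of $(t-t')$, and one checks the exponent works out to at least $\alpha p$ (the worst case coming from $G_1$ or $G_0$, since $\alpha \le \alpha_i$). For the kernel-difference terms I would invoke Lemma \ref{l.1} (and Lemma \ref{l.2} for the $\beta_2$ case, where $\alpha - \beta_2$ may exceed or equal $1/2$) after pulling out the $L^p$-bounded integrand via Hölder; crucially, Assumption (1) is needed only for the $G_1, G_2$ terms to control $|g_j(v,s,\cdot)|$ uniformly in $v$ via the time-Lipschitz estimate, turning the double integral into a single weakly singular integral as in Theorem \ref{t.5}.

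Assembling the five bounds and using $(t-t')^{c_1} \le T^{c_1 - c_2}(t-t')^{c_2}$ for $c_1 \ge c_2$ on a bounded interval, all terms are dominated by $C(t-t')^{\alpha p}$, which is the claim.

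The main obstacle I anticipate is bookkeeping the exponents: one must verify for each of the four nonlinearities that the Hölder splitting leaves enough integrability in the singular kernel (i.e.\ that $\frac{p\alpha - q}{p-1} - 1 > -1$ and the analogous condition with $p-2$ in the stochastic case, which forces the choice $q \in (0, p\alpha)$ and in the $G_2$ case the constraint $\beta_2 < 1/2$), and simultaneously that the resulting power of $(t-t')$ is \emph{at least} $\alpha p$ rather than something smaller — the borderline contributions being the drift term $G_0$ and the singular-kernel term $G_1$. The stochastic term $G_2$ requires the extra care of Lemma \ref{l.2} because its effective Hölder exponent $a - b = \alpha - \beta_2$ can land in any of the three regimes $(-\tfrac12,\tfrac12)$, $\{\tfrac12\}$, or $(\tfrac12,1)$; one has to confirm that even in the worst regime the exponent of $h$ (hence of $t-t'$) does not drop below $\alpha p$ after raising to the $p/2$ power and combining with the tail estimate. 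A secondary subtlety is the passage from arbitrary $t,t'$ to mesh points: one should handle the case $t - t' < h$ by noting either $\hat z^N(t) = \hat z^N(t')$ (nothing to prove) or they are consecutive grid values, in which case the bound reduces to the one-step estimate already subsumed in the mesh-point analysis with $t_k - t_{k-1} = h \le (t-t') + h \le 2h$, and $h^{\alpha p}$ is controlled because on that event $t - t'$ is comparable to $h$ only when the jump actually occurs — more precisely one shows $E[|\hat z^N(t) - \hat z^N(t')|^p]$ vanishes unless a mesh point lies in $(t', t]$, and then $t - t' \le h$ forces the estimate through the single-step bound $E[|z^N(t_k) - z^N(t_{k-1})|^p] \le C h^{\alpha p}$.
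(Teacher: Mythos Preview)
Your approach is essentially identical to the paper's: split the increment via \eqref{10}, decompose each of the $F_i,\,G_0,\,G_1,\,G_2$ contributions into a tail integral over $[t',t]$ and a kernel-difference integral over $[0,t']$, and then bound these using H\"older's inequality, BDG, the linear growth Assumption~(4), the moment bound of Lemma~\ref{l.3}, and standard weakly singular kernel-difference estimates (the paper cites Lemmas~3.1--3.2 of \cite{6} for these, which are the general-$t,t'$ analogues of the discretized Lemmas~\ref{l.1}--\ref{l.2}); the treatment of the three regimes for $\alpha-\beta_2$ in the $G_2$ term, with the choice $\varepsilon=2-2\alpha$ at the borderline, matches exactly. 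One remark: the paper's proof in fact establishes the bound for $z^N(t)-z^N(t')$ (the continuous-time EM process), and the $\hat z^N$ in its final displayed line is a slip; your careful discussion of the step-process-to-mesh-point reduction is addressing a wrinkle the paper itself glosses over, and the $z^N$ version is what is actually used downstream (Cauchy property and Fatou's lemma in the existence proof).
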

\begin{proof}
	The relation  (\ref{10}) implies that
	\begin{align*}
		E\big[\vert z^{N}(t)-z^{N}(t^{\prime})\vert^{p}\big]\leq& (n+3)^{p-1}\bigg\lbrace E\bigg[\sum_{i=1}^{n}\bigg\vert \int_{0}^{t}F_{i}(t,s,\hat{z}^{N}(s))ds-\int_{0}^{t^{\prime}}F_{i}(t,s,\hat{z}^{N}(s))ds\bigg\vert^{p}\\
		+&E\bigg[\bigg\vert \int_{0}^{t}G_{0}(t,s,\hat{z}^{N}(s))ds-\int_{0}^{t^{\prime}}G_{0}(t,s,\hat{z}^{N}(s))ds\bigg\vert^{p}\bigg]\\
		+&E\bigg[\bigg\vert \int_{0}^{t}G_{1}(t,s,\hat{z}^{N}(s))ds-\int_{0}^{t^{\prime}}G_{1}(t,s,\hat{z}^{N}(s))ds\bigg\vert^{p}\bigg]\\
		+&E\bigg[\bigg\vert \int_{0}^{t}G_{2}(t,s,\hat{z}^{N}(s))dW(s)-\int_{0}^{t^{\prime}}G_{2}(t,s,\hat{z}^{N}(s))dW(s)\bigg\vert^{p}\bigg]\bigg\rbrace\\
		:=&(n+3)^{p-1}\bigg(K_{0}+K_{1}+K_{2}+K_{3}\bigg)
	\end{align*}
	Applying Holder's inequality while considering assumptions (1) and (4), along with \cite{6} (from Lemma 3.1) and the previous statement of Lemma 3, we can demonstrate the following.
	\begin{align*}
		K_{0}=&E\bigg[\sum_{i=1}^{n}\bigg\vert \int_{0}^{t}F_{i}(t,s,\hat{z}^{N}(s))ds-\int_{0}^{t^{\prime}}F_{i}(t,s,\hat{z}^{N}(s))ds\bigg\vert^{p}\\
		\leq&  2^{p-1}\bigg\lbrace \sum_{i=1}^{n}E\bigg[\bigg\vert \int_{0}^{t^{\prime}}\big((t-s)^{\alpha_{i}-1}-(t^{\prime}-s)^{\alpha_{i}-1}\big)f_{i}(s,\hat{z}^{N}(s))ds\bigg\vert^{p}\bigg] \\
		+&\sum_{i=1}^{n}E\bigg[\bigg\vert \int_{t^{\prime}}^{t}(t-s)^{\alpha_{i}-1}f_{i}(s,\hat{z}^{N}(s))ds\bigg\vert^{p}\bigg] \\
		\leq& C\bigg\lbrace \sum_{i=1}^{n} \bigg(\int_{0}^{t^{\prime}}\big\vert(t-s)^{\alpha_{i}-1}-(t^{\prime}-s)^{\alpha_{i}-1}\big\vert ds\bigg)^{p-1}\\
		&\int_{0}^{t^{\prime}}\big\vert(t-s)^{\alpha_{i}-1}-(t^{\prime}-s)^{\alpha_{i}-1}\big\vert \bigg(1+E\big[\vert \hat{z}^{N}(s)\vert^{p}\big]\bigg)ds\\
		+&\sum_{i=1}^{n}\bigg(\int_{t^{\prime}}^{t}(t-s)^{\alpha_{i}-1}ds\bigg)^{p-1}\int_{t^{\prime}}^{t}(t-s)^{\alpha_{i}-1}\bigg(1+E\big[\vert \hat{z}^{N}(s)\vert^{p}\big]\bigg)ds\bigg\rbrace\\
		\leq& C\sum_{i=1}^{n} (t-t^{\prime})^{\alpha_{i}p}\leq  C (t-t^{\prime})^{\alpha p}
	\end{align*}
	By using the same technique, we will achieve the following results for $K_1$ and $K_2$.
	\begin{align*}
		K_{1}\leq C (t-t^{\prime})^{\alpha p}\quad and\quad
		K_{2}\leq C(t-t^{\prime})^{(1\land (1+\alpha-\beta_{1}) )p}
	\end{align*}
	Using Holder inequality, Burkholder-Davis-Gundy inequality, and Assumptions (1),(4), as well as Lemma 3. and, from \cite{6} (see lemma 3.2),
	\begin{align*}
		K_{3}\leq& C\bigg\lbrace E\bigg[\Big\vert \int_{0}^{t^{\prime}} ((t-s)^{\alpha-\beta_{2}}-(t^{\prime}-s)^{\alpha-\beta_{2}})^{2}\sup_{s\leq v\leq t}\big\vert g_{2}(v,s,\hat{z}^{N}(s))\big\vert^{2} ds\Big\vert^{p/2} \bigg]\\
		+&E\bigg[\Big\vert \int_{0}^{t^{\prime}} ((t^{\prime}-s)^{2(\alpha-\beta_{2})}\sup_{0\leq v\leq 1}\big\vert g_{2}((t-s)v+s,s,\hat{z}^{N}(s))-g_{2}((t^{\prime}-s)v+s,s,\hat{z}^{N}(s))\big\vert^{2} ds\Big\vert^{p/2} \bigg]\\
		+& E\bigg[\Big\vert \int_{t^{\prime}}^{t} ((t-s)^{2(\alpha-\beta_{2})}\sup_{s\leq v\leq t}\big\vert g_{2}(v,s,\hat{z}^{N}(s))\big\vert^{2} ds\Big\vert^{p/2} \bigg\rbrace\\
		\leq& C\bigg\lbrace E\bigg[\Big\vert \int_{0}^{t^{\prime}} ((t-s)^{\alpha-\beta_{2}}-(t^{\prime}-s)^{\alpha-\beta_{2}})^{2}ds\bigg\vert ^{\frac{p-2}{2}}\\
		&\int_{0}^{t^{\prime}}((t-s)^{\alpha-\beta_{2}}-(t^{\prime}-s)^{\alpha-\beta_{2}})^{2}\Big(1+E\big[\vert \hat{z}^{N}(s)\vert^{p}\big]\Big)ds\\
		+&(t-t^{\prime})^{2}\bigg\vert \int_{0}^{t^{\prime}} (t^{\prime}-s)^{2(\alpha-\beta_{2})}ds\bigg\vert^{\frac{p-2}{2}}\int_{0}^{t^{\prime}}(t^{\prime}-s)^{2(\alpha-\beta_{2})}\Big(1+E\big[\vert \hat{z}^{N}(s)\vert^{p}\big]\Big)ds\\
		+&\bigg\vert \int_{t^{\prime}}^{t} (t^{\prime}-s)^{2(\alpha-\beta_{2})}ds\bigg\vert^{\frac{p-2}{2}}\int_{0}^{t^{\prime}}(t^{\prime}-s)^{2(\alpha-\beta_{2})}\Big(1+E\big[\vert \hat{z}^{N}(s)\vert^{p}\big]\Big)ds\\
		\leq& 	\begin{cases}
			C(t-t^{\prime})^{(2-\varepsilon)\frac{p}{2}},\quad \quad \quad \quad \quad if\quad \alpha-\beta_{2}=1/2,\\
			\\
			C(t-t^{\prime})^{2\land (1+2(\alpha-\beta_{2}))\frac{p}{2}},\quad otherwises.
		\end{cases}
	\end{align*}
	If $\alpha - \beta_2 = \frac{1}{2}$, then $2-2\alpha \in (0,1)$ because $\beta_2 \in (0,\frac{1}{2})$. By setting $\varepsilon = 2-2\alpha$, we have 
	\begin{align*}
		K_{3}\leq C (t-t^{\prime})^{\alpha p}.
	\end{align*}
	As a result, applying these results on the right side of the first inequality above, we will acquire
	\begin{align*}
		E\Big[\vert \hat{z}^{N}(t)-\hat{z}^{N}(t^{\prime})\vert^{p}\Big]\leq(n+3)^{p}(K_{0}+K_{1}+K_{2}+K_{3})\leq C(t-t^{\prime})^{\alpha p}.
	\end{align*}
\end{proof}
\subsection{Proof of the Theorem 1.}
\begin{proof}
	To keep things simple, we just prove the global Lipschitz condition. Indeed, according to the proof under the global Lipschitz condition, using the truncation functions
	\begin{align*}
		G^{n}_{i}(t,s,z):=\begin{cases}
			G_{i}(t,s,z),\quad if\quad \vert z\vert\leq n,\\
			\quad 	\quad	\quad	\quad	\quad	\quad	\quad 	\quad	\quad 	\quad	\quad 	\quad	\quad i=0,1,2.\\
			G_{i}(t,s,\frac{nz}{\vert z\vert}),\quad if\quad \vert z\vert> n.
		\end{cases}
	\end{align*}
	Using a similar proof technique to that of Theorem 3.4 in Chapter 2 of \cite{7}, we can readily deduce the desired outcome for the situation where the local Lipschitz condition holds for any positive integer $n\geq1$.

	\textbf{Uniqueness.}  Let $z(t)$ and $\tilde{z}(t)$ be two solutions of the Stochastic Fractional Neutral Integro-differential Equation  (\ref{A}) on the same probability space with $z(0)=\tilde{z}(0)$.  Theorem \ref{t.5}  shows that $z(t)$ and $\tilde(t)$ are also two solutions to the Stochastic Volterra Integral Equation (\ref{99}). Using Hölder's inequality, Ito isometry, and the Lipschitz condition (\ref{4}), one can deduce from (\ref{789}) that
	\begin{align}
		E\Big[\big\vert z(t)-\tilde{z}(t)\big\vert^{2}\Big]\leq& \tilde{C} E\bigg\lbrace \sum_{i=0}^{n} \bigg\vert \int_{0}^{t} (t-s)^{\alpha_{i}-1} \big(f_{i}(s,z(s))-f_{i}(s,\tilde{z}(s))\big)ds\bigg\vert^{2}\nonumber\\
		+&\bigg\vert \int_{0}^{t} (t-s)^{\alpha-1} \big(g_{0}(s,z(s))-g_{0}(s,\tilde{z}(s))\big)ds\bigg\vert^{2}\nonumber\\
		+&\bigg\vert \int_{0}^{t} (t-s)^{\alpha-\beta_{1}} \sup_{s\leq v\leq t}\big\vert g_{1}(v,s,z(s))-g_{1}(v,s,\tilde{z}(s))\big\vert ds\bigg\vert^{2}\nonumber\\
		+& \int_{0}^{t} (t-s)^{2(\alpha-\beta_{2})} \sup_{s\leq v\leq t}\big\vert g_{2}(v,s,z(s))-g_{2}(v,s,\tilde{z}(s))\big\vert^{2} ds\bigg\rbrace\nonumber\\
		\leq&\tilde{C}\Bigg\lbrace \sum_{i=0}^{n} \int_{0}^{t}(t-s)^{\alpha_{i}-1}E\Big[\big\vert z(s)-\tilde{z}(s)\big\vert^{2}\Big]ds+\int_{0}^{t}(t-s)^{\alpha-1}E\Big[\big\vert z(s)-\tilde{z}(s)\big\vert^{2}\Big]ds\Bigg\rbrace		
	\end{align}
Assume that $0<\alpha<\alpha_{1}<\dots<\alpha_{n}\leq 1$. Then we have 
\begin{align}\label{11}
		E\Big[\big\vert z(t)-\tilde{z}(t)\big\vert^{2}\Big]\leq&\tilde{C}\Bigg\lbrace \sum_{i=0}^{n} \int_{0}^{t}(t-s)^{\alpha_{i}-1}E\Big[\big\vert z(s)-\tilde{z}(s)\big\vert^{2}\Big]ds+\int_{0}^{t}(t-s)^{\alpha-1}E\Big[\big\vert z(s)-\tilde{z}(s)\big\vert^{2}\Big]ds\Bigg\rbrace\nonumber\\
		\leq & \tilde{C}\Bigg\lbrace \sum_{i=0}^{n} \int_{0}^{t}(t-s)^{\alpha_{i}-\alpha+\alpha-1}E\Big[\big\vert z(s)-\tilde{z}(s)\big\vert^{2}\Big]ds+\int_{0}^{t}(t-s)^{\alpha-1}E\Big[\big\vert z(s)-\tilde{z}(s)\big\vert^{2}\Big]ds\Bigg\rbrace\nonumber\\
		\leq& \tilde{C}\Bigg\lbrace n\max\{1, T^{\alpha_{n}-\alpha}\} \int_{0}^{t}(t-s)^{\alpha-1}E\Big[\big\vert z(s)-\tilde{z}(s)\big\vert^{2}\Big]ds+\int_{0}^{t}(t-s)^{\alpha-1}E\Big[\big\vert z(s)-\tilde{z}(s)\big\vert^{2}\Big]ds\Bigg\rbrace\nonumber\\
		\leq& C\int_{0}^{t}(t-s)^{\alpha-1}E\Big[\big\vert z(s)-\tilde{z}(s)\big\vert^{2}\Big]ds
\end{align}
	Then, weakly singular  Gronwall's inequality (\cite{5}, Corollary 2), we will obtain.  
	\begin{align*}
		E\Big[\vert z(t)-\tilde{z}(t)\vert^{2}\Big]=0, \quad \forall t\in[0,T].
	\end{align*}
	which indicates
	\begin{align*}
		P\big\lbrace \vert z(t)-\tilde{z}(t)\vert=0,\quad \forall t\in Q\cap[0,T]\big\rbrace=1,
	\end{align*}
	where $Q$ shows the set of all rational numbers. It follows from the continuity of $\vert z(t)-\tilde{z}(t)\vert$ with respect to $t$ that 
	\begin{align*}
		P\big\lbrace \vert z(t)-\tilde{z}(t)\vert=0,\quad \forall t\in [0,T]\big\rbrace=1,
	\end{align*}
	The uniqueness has been proven.
	
	\textbf{Existence.}  Let $M \geq N \geq 1$. Similar to the derivation of the estimate (\ref{11}), one can claim from  (\ref{10}) that for any $p \geq 2$,
	
	\begin{align*}
		E\Big[\vert z^{M}(t)-z^{N}(t) \vert^{p}\Big]\leq&\bar{C}\bigg\lbrace \sum_{i=0}^{n} \int_{0}^{t}(t-s)^{\alpha_{i}-1}E\Big[\big\vert \hat{z}^{M}(s)-\hat{z}^{N}(s)\big\vert^{p}\Big]ds\\
		+&\int_{0}^{t}(t-s)^{\alpha-1}E\Big[\big\vert\hat{z}^{M}(s)-\hat{z}^{N}(s)\big\vert^{p}\Big]ds\Bigg\rbrace\\
			&\leq\bar{C}\bigg\lbrace n\max\{1, T^{\alpha_{n}-\alpha}\} \int_{0}^{t}(t-s)^{\alpha-1}E\Big[\big\vert \hat{z}^{M}(s)-\hat{z}^{N}(s)\big\vert^{p}\Big]ds\\
			+&\int_{0}^{t}(t-s)^{\alpha-1}E\Big[\big\vert\hat{z}^{M}(s)-\hat{z}^{N}(s)\big\vert^{p}\Big]ds\Bigg\rbrace\\
		\leq& C\int_{0}^{t}(t-s)^{\alpha-1}E\Big[\big\vert\hat{z}^{M}(s)-\hat{z}^{N}(s)\big\vert^{p}\Big]ds
	\end{align*}where  the positive constant $C$, which is not dependent on $M$ and $N$. Following that, using the triangle inequality with Lemma 4 and the weakly singular Gronwall's inequality, one can derive that $z^{N}(t)$ is a Cauchy sequence with a limit $z(t)$ in $L^{p}(\Omega, R^{d})$. Clearly, $z(t)$ is $\mathcal{F}_{t}$-adapted. Furthermore, Lemma 4 and Fatou's lemma imply that
	\begin{align*}
		E\Big[\vert \hat{z}^{N}(t)-\hat{z}^{N}(t^{\prime})\vert^{p}\Big]\leq C(t-t^{\prime})^{\alpha p}.
	\end{align*}
	The process $z(t)$ has a continuous version according to Kolmogorov's continuity condition. Allowing $N \to+\infty$ in (\ref{10}) implies that the continuous version solves the SVIE (\ref{789}) on $[0,T]$. The continuous variant is likewise a solution to SFNIDE  (\ref{A}), according to  Theorem \ref{t.5}. Finally, according to Lemma 3, letting $N \to +\infty$ produces the conclusion.
	\begin{align*}
		E\Big[\vert \hat{z}^{N}(t)\vert^{p}\Big]\leq+\infty,\quad \forall t\in [0,T].
	\end{align*}
	The proof is complete.
\end{proof}

\subsection{Continious dependence of solutions on the initial value.}\label{sec4}
We are going to examine the continuous dependence of solutions on the initial value in this section of the article.
\begin{definition}\label{d.2}
	The solution of the SFNIDE (\ref{A}) is said to be continuous depending on the value in a mean square sense if there is a positive number $\eta$ for every $\varepsilon$ such that 
	\begin{align*}
		E\big[\vert z(t)-\bar{z}(t)\vert^{2}\big]<\varepsilon,\quad \forall\in [0,T],
	\end{align*}
	provided that $	E\big[\vert z_{0}(t)-\bar{z}_{0}(t)\vert^{2}\big]<\eta$, where $\bar{z}(t)$ is each other solution to  (\ref{A}) with the initial value $z_{0}\in R^{d}$.
\end{definition}
\subsubsection{Proof of the Theorem 2.}
\begin{proof}
	Let $z(t)$ and $\bar{z}(t)$ be two solutions to equation  (\ref{A}) with the  distinct initial values $z_{0}$ and $\bar{z}_{0}$, accordingly. Then, $z(t)$ and $\bar{z}(t)$ are also two solutions to equation (4). To make it easier, let $e(t)=z(t)-\bar{z}(t)$,
	\begin{align}\label{13}
		\rho_{k}=\inf \lbrace t\geq 0 : \vert z(t)\vert\geq k\rbrace, \quad \mu_{k}=\inf \lbrace t\geq 0 : \vert \bar{z}(t)\vert\geq k\rbrace,\quad \sigma_{k}=\rho_{k}\land \mu_{k}
	\end{align}
	for every integer $k\geq 1$. 
	Then, by using Young inequality (\ref{yon}) for each $\delta>0$, we have
	\begin{align}
		&E\big[\vert e(t)\vert^{2}\big]=E\big[\vert e(t)\vert^{2} 1_{\lbrace \rho_{k}>T, \mu_{k}>T\rbrace}\big]+	E\big[\vert e(t)\vert^{2}1_{\lbrace \rho_{k}\leq T,\quad or\quad \mu_{k}\leq T\rbrace}\big]\nonumber\\
		\quad 	\quad	\quad 	\quad	\quad\\
		&\leq E\big[\vert e(t\land \sigma_{k})\vert^{2}1_{\lbrace \sigma_{k}>T\rbrace}\big]+\frac{2\delta}{p} E\big[\vert e(t)\vert^{p}\big]+\frac{p-2}{p\delta^{\frac{2}{p-2}}}P(\rho_{k}\leq T\quad or\quad \mu_{k}\leq T)\nonumber
	\end{align}
	On the one hand, Theorem \ref{t.1} provides  
	\begin{align*}
		E\big[\vert e(t)\vert^{p}\big]\leq 2^{p-1}E\big[\vert z(t)\vert^{p}+\vert \bar{z}(t)\vert^{p}\big]\leq 2^{p}M,
	\end{align*}
	in this proof and throughout the rest of it, the variable $M$ represents a positive constant that does not depend on the values of $k$ and $\delta$. On the other hand, we will obtain 
	\begin{align}\label{15}
		&P(\rho_{k}\leq T\quad or\quad \mu_{k}\leq T)\leq P(\rho_{k}\leq T)+P( \mu_{k}\leq T)\nonumber\\
		&=E\bigg(1_{\lbrace \rho_{k}\leq T\rbrace}\frac{\vert z(\rho_{k})\vert^{p}}{k^{p}}\bigg)+E\bigg(1_{\lbrace \mu_{k}\leq T\rbrace}\frac{\vert z(\mu_{k})\vert^{p}}{k^{p}}\bigg)\\
		&\leq \frac{1}{k^{p}}\Big\lbrace E\big(\vert z(\rho_{k}\land T)\vert^{p}\big)+E\big(\vert z(\mu_{k}\land T)\vert^{p}\big)\Big\rbrace\leq \frac{2M}{k^{p}}\nonumber
	\end{align}
Thus, the inequality (\ref{15}) gives 
\begin{align}\label{16}
	E\big[\vert e(t)\vert^{2}\big]\leq E\big[\vert e(t\land \sigma_{k}\vert^{2})\big]+\frac{2^{p+1}\delta M}{p}+\frac{2(p-2)M}{p \delta^{\frac{2}{p-2}}k^{p}}.
\end{align}
By using similar technique with (\ref{11}), we have.
	\begin{align}\label{xt}
		&E\big[\vert e(t\land \sigma_{k}\vert^{2})\big]=E\big[\vert z(t\land \sigma_{k})-\bar{z}(t\land \sigma_{k})\vert^{2}\big]\nonumber\\
		\leq& C E\bigg\lbrace \vert z_{0}-\bar{z}_{0}\vert^{2}+\sum_{i=1}^{n}\bigg\vert \int_{0}^{t\land \sigma_{k}} (t\land \sigma_{k}-s)^{\alpha_{i}-1}\big(f_{i}(s,z(s))-f_{i}(s,\bar{z}(s))\big)ds\bigg\vert^{2}\nonumber\\
		+&\bigg\vert \int_{0}^{t\land \sigma_{k}} (t\land \sigma_{k}-s)^{\alpha-1}\big( g_{0}(s,z(s))-g_{0}(s,\bar{z}(s))\big) ds\bigg\vert^{2}\nonumber\\
		+&\bigg\vert \int_{0}^{t\land \sigma_{k}} (t\land \sigma_{k}-s)^{\alpha-\beta_{1}}\sup_{s\leq v\leq t}\big\vert g_{1}(v,s,z(s))-g_{1}(v,s,\bar{z}(s))\big\vert ds\bigg\vert^{2}\nonumber\\
		+& \int_{0}^{t\land \sigma_{k}} (t\land \sigma_{k}-s)^{2(\alpha-\beta_{2})}\sup_{s\leq v\leq t}\big\vert g_{2}(v,s,z(s))-g_{2}(v,s,\bar{z}(s))\big\vert^{2} ds\bigg\rbrace\nonumber\\
		\leq&C\bigg\lbrace E\big[\vert z_{0}-\bar{z}_{0}\vert^{2}\big]+\sum_{i=1}^{n} \int_{0}^{t\land \sigma_{k}}  (t\land \sigma_{k}-s)^{\alpha_{i}-1}E\big[\vert e(s\land \sigma_{k})\vert^{2}\big]ds\nonumber\\
		+&\int_{0}^{t\land \sigma_{k}}  (t\land \sigma_{k}-s)^{\alpha-1}E\big[\vert e(s\land \sigma_{k})\vert^{2}\big]ds\bigg\rbrace
	\end{align} 
Suppose that $0<\alpha\leq \alpha_{1}<\alpha_{2}<\dots,<\alpha_{n}\leq1$. We will obtain the following relation.
\begin{align*}
	&\sum_{i=1}^{n} \int_{0}^{t\land \sigma_{k}}  (t\land \sigma_{k}-s)^{\alpha_{i}-1}E\big[\vert e(s\land \sigma_{k})\vert^{2}\big]ds+\int_{0}^{t\land \sigma_{k}}  (t\land \sigma_{k}-s)^{\alpha-1}E\big[\vert e(s\land \sigma_{k})\vert^{2}\big]ds\\
	=&\sum_{i=1}^{n} \int_{0}^{t\land \sigma_{k}}  (t\land \sigma_{k}-s)^{\alpha_{i}-\alpha+\alpha-1}E\big[\vert e(s\land \sigma_{k})\vert^{2}\big]ds+\int_{0}^{t\land \sigma_{k}}  (t\land \sigma_{k}-s)^{\alpha-1}E\big[\vert e(s\land \sigma_{k})\vert^{2}\big]ds\\
	\leq & n \max\{1, T^{\alpha_{n}-\alpha}\}\int_{0}^{t\land \sigma_{k}}  (t\land \sigma_{k}-s)^{\alpha-1}E\big[\vert e(s\land \sigma_{k})\vert^{2}\big]ds+\int_{0}^{t\land \sigma_{k}}  (t\land \sigma_{k}-s)^{\alpha-1}E\big[\vert e(s\land \sigma_{k})\vert^{2}\big]ds\\
	\leq &\bar{C} \int_{0}^{t\land \sigma_{k}}  (t\land \sigma_{k}-s)^{\alpha-1}E\big[\vert e(s\land \sigma_{k})\vert^{2}\big]ds\\
\end{align*}
As a result, by using the above last expression in the \eqref{xt}, we achieve

\begin{align}
		&E\big[\vert e(t\land \sigma_{k}\vert^{2})\big]\leq K \bigg\lbrace E\big[\vert z_{0}-\bar{z}_{0}\vert^{2}\big]+\int_{0}^{t\land \sigma_{k}}  (t\land \sigma_{k}-s)^{\alpha-1}E\big[\vert e(s\land \sigma_{k})\vert^{2}\big]ds\bigg\rbrace
\end{align}
Where $K=\max\{C, \bar{C}\}$.
Applying the weakly singular Gronwall inequality (\cite{5},Corollary 2) and subsequently substituting the obtained outcome into equation (\ref{16}) results in
\begin{align*}
		E\big[\vert e(t)\vert^{2}\big]\leq C_{k}E\big[\vert z_{0}-\bar{z}_{0}\vert^{2}\big]+\frac{2^{p+1}\delta M}{p}+\frac{2(p-2)M}{p \delta^{\frac{2}{p-2}}k^{p}},
\end{align*}
where the positive constant $C_{k}$ depends on $k$, but not on $\delta$. It is observed that $\delta$ and $k$ can be can chosen such that
\begin{align*}
	\frac{2^{p+1}\delta M}{p}<\frac{\varepsilon}{3},\quad \frac{2(p-2)M}{p \delta^{\frac{2}{p-2}}k^{p}} <\frac{\varepsilon}{3},
\end{align*}
and there is a positive number $\eta>0$ such that $C_{k}E\big[\vert z_{0}-\bar{z}_{0}\vert^{2}\big]<C_{k}\eta<\frac{\varepsilon}{3}$
Consequently,
\begin{align*}
	E\big[\vert e(t)\vert^{2}\big]<\varepsilon,\quad \forall\in [0,T],
\end{align*}
The proof is complete.
\end{proof}
\section{Strong convergence and convergence rate of the Euler-Maruyama method.}

	It becomes particularly crucial to take into account effective numerical techniques because it isn't easy to acquire the closed-form solution to the SFNIDE  (\ref{A}). Even though the EM approximation (\ref{10}) offers a numerical approximation in the preceding section, it will require many stochastic integral calculations. In this section, we now change approximation (\ref{10}) to reduce this cost.
	
	Using the EM approximation (\ref{10}) and the same values, we can change it utilizing the left rectangle rule (\cite{3},\cite{11}) as
	
	\begin{align}\label{17}
		Z(t)=&z_{0}+\sum_{i=1}^{n}\int_{0}^{t}F_{i}(t,\hat{s},\hat{Z}(s))ds+\int_{0}^{t}G_{0}(t,\hat{s},\hat{Z}(s))ds\nonumber\\
		+&\int_{0}^{t}G_{1}(t,\hat{s},\hat{Z}(s))ds+\int_{0}^{t}G_{2}(t,\hat{s},\hat{Z}(s))dW(s),
	\end{align} 
where $\hat{s}=t_{n}$ for $s\in [t_{n},t_{n+1})$ and the simple step process $\hat{Z}(t)=\sum_{n=0}^{N}Z(t_{n})\dot 1_{ [t_{n},t_{n+1})}(t)$. We use the discrete-time form of the algorithm in our implementation
\begin{align}\label{18}
	Z_{n}=&Z(t_{n})=z_{0}+\sum_{j=0}^{n-1}\sum_{i=1}^{n} F_{i}(t_{n},\hat{s},Z_{j})h+\sum_{j=0}^{n-1} G_{0}(t_{n},\hat{s},Z_{j})h\nonumber\\
	+&\sum_{j=0}^{n-1} G_{1}(t_{n},\hat{s},Z_{j})h+\sum_{j=0}^{n-1} G_{2}(t_{n},\hat{s},Z_{j})\Delta W_{j},\quad n=1,\dots,N.
\end{align}
with $Z_{0}=z_{0}$, where $\Delta W_{j}=W(t_{j+1})-W(t_{j}),\quad j=0,1,\dots, N-1$ denote the increments of Brownian motion $W(t)$. We will significantly minimize the number of calculations because we only must simulate these increments and do not need to compute other stochastic integrals.

\subsection{Strong convergence.}

\begin{lemma}\label{l.5}
	If Assumptions (4) satisfies, then there is a constant $C>0$ independent of $h$ such that for each integer $p\geq 2$,
	\begin{align*}
		E\big[\vert Z(t)\vert^{p}\big]\leq C\quad and\quad 	E\big[\vert \hat{Z}(t)\vert^{p}\big]\leq C,\quad \forall t\in [0,T]
	\end{align*}
\end{lemma}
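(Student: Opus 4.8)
The plan is to follow the same route as the proof of Lemma \ref{l.3}, adapted to the modified scheme (\ref{17}). First I would introduce the stopping times $\varrho^{N}_{m}=T\wedge\inf\{t\in[0,T]:|Z(t)|\ge m\}$ (which increase to $T$ a.s.), set $Z_{m}(t)=Z(t\wedge\varrho^{N}_{m})$ and $\hat{Z}_{m}(t)=\hat{Z}(t\wedge\varrho^{N}_{m})$, and work with the stopped process so that every moment is a priori finite; the final bound for $Z$ itself is recovered by letting $m\to\infty$ and invoking Fatou's lemma, exactly as in Lemma \ref{l.3}. Note that once $E[|Z(t)|^{p}]\le C$ is established, the bound on $\hat{Z}$ is immediate since $\hat{Z}(t)=Z(\hat t)$ with $\hat t\le t$, so $E[|\hat{Z}(t)|^{p}]\le\sup_{0\le s\le t}E[|Z(s)|^{p}]\le C$.

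Second, I would decompose the right-hand side of (\ref{17}) into the three "drift" Lebesgue integrals (the sum over $F_{i}$, the $G_{0}$ term, the $G_{1}$ term) and the stochastic integral involving $G_{2}$, and raise to the $p$-th power using $(a_{1}+\dots+a_{n+4})^{p}\le(n+4)^{p-1}\sum_{k}a_{k}^{p}$. For the drift terms I would apply Hölder's inequality (Cauchy--Schwarz when $p=2$), splitting each singular kernel so that a factor $(t\wedge\varrho^{N}_{m}-s)^{q-1}$ with $q\in(0,p\alpha)$ remains, with $q$ chosen so that the companion exponents $\tfrac{p\alpha_{i}-q}{p-1}-1$ and $\tfrac{p\alpha-q}{p-1}-1$ stay $>-1$; for the $G_{2}$ term I would first apply the Burkholder--Davis--Gundy inequality and then Hölder with exponent $\tfrac{2(p\alpha-q)}{p-2}-1>-1$, using that $\beta_{2}\in(0,\tfrac12)$ guarantees $2(\alpha-\beta_{2})>-1$ so all kernels are integrable. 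The only genuinely new point compared with Lemma \ref{l.3} is that the first time argument is frozen at $\hat{s}=t_{n}$: in $F_{i}(t,\hat{s},\cdot)$ and $G_{0}(t,\hat{s},\cdot)$ the kernels are $(t-\hat{s})^{\alpha_{i}-1}$ and $(t-\hat{s})^{\alpha-1}$, and since $\alpha,\alpha_{i}\le1$ and $t-\hat{s}\ge t-s>0$ we get $(t-\hat{s})^{\alpha_{i}-1}\le(t-s)^{\alpha_{i}-1}$, so the very same kernel as in Lemma \ref{l.3} dominates; in $G_{1},G_{2}$ the factor $(t-\hat{s})^{\alpha-\beta_{j}}$ is bounded by $T^{\alpha-\beta_{j}}$ when $\alpha\ge\beta_{j}$ and by $(t-s)^{\alpha-\beta_{j}}$ otherwise. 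Then Assumption (4), being a linear-growth bound uniform in the time variables, gives $|f_{i}(\hat{s},\hat{Z}_{m}(s))|\le L(1+|\hat{Z}_{m}(s)|)$ and likewise for $g_{0},g_{1},g_{2}$, so the $\hat s$ causes no trouble beyond this comparison of kernels.

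Third, using $\hat{Z}_{m}(s)=Z_{m}(\hat{s})$ with $\hat{s}\le s$, hence $E[|\hat{Z}_{m}(s)|^{p}]\le\sup_{0\le\eta\le s}E[|Z_{m}(\eta)|^{p}]$, I collect the estimates and, after the substitutions $v=s/(\lambda\wedge\varrho^{N}_{m})$ followed by $s=tv$ that move the outer supremum past the weakly singular kernel (as in Lemma \ref{l.3}), arrive at
\begin{align*}
\sup_{0\le\lambda\le t}E\big[|Z_{m}(\lambda)|^{p}\big]\le C\Big(1+\int_{0}^{t}(t-s)^{q-1}\sup_{0\le\eta\le s}E\big[|Z_{m}(\eta)|^{p}\big]\,ds\Big),
\end{align*}
with $C$ independent of $m,N,t$. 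The weakly singular Gronwall inequality (\cite{5}, Corollary~2) then yields $E[|Z_{m}(t)|^{p}]\le C$ for all $t\in[0,T]$, Fatou gives $E[|Z(t)|^{p}]\le C$, and the bound on $\hat{Z}$ follows as noted above. I expect the only delicate bookkeeping to be the choice of $q\in(0,p\alpha)$ and the verification that every singular exponent produced by Hölder and BDG exceeds $-1$ — precisely where $0<\alpha\le\alpha_{i}\le1$ and $\beta_{2}<\tfrac12$ are used — together with the frozen-argument comparison $(t-\hat{s})^{\alpha_{i}-1}\le(t-s)^{\alpha_{i}-1}$; everything else is a verbatim repetition of the argument for Lemma \ref{l.3}.
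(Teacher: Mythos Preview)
Your proposal is correct and follows exactly the approach the paper intends: the paper's own proof of Lemma~\ref{l.5} simply reads ``The proof has been omitted since it is exactly the same as the proof of Lemma~3,'' and your write-up is precisely a faithful adaptation of that argument to the scheme (\ref{17}), with the only genuinely new observation --- the comparison $(t-\hat{s})^{\alpha_{i}-1}\le(t-s)^{\alpha_{i}-1}$ (and its analogues for the $G_{j}$ kernels) that neutralises the frozen time argument --- handled correctly.
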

\begin{proof}
	The proof has been omitted since it is exactly the same as the proof of Lemma 3.
\end{proof}
\begin{lemma}\label{l.6}
	If the Assumption (4) satisfies, then there is a constant $C>0$ independent of $h$ such that for each integer $p\geq2$,
	\begin{align*}
		E\big[\vert Z(t)-\hat{Z}(t)\vert^{2}\big]\leq Ch^{2\alpha},\quad \forall t\in [0,T].
	\end{align*}
\end{lemma}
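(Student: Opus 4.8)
The plan is to estimate $E\big[\vert Z(t)-\hat{Z}(t)\vert^{2}\big]$ by fixing $t\in[t_{n},t_{n+1})$ so that $\hat{Z}(t)=Z(t_{n})$, and then writing $Z(t)-Z(t_{n})$ as the difference of the four integral terms in \eqref{17} evaluated at the upper limits $t$ and $t_{n}$. Concretely, I would split
\begin{align*}
Z(t)-Z(t_{n})=&\sum_{i=1}^{n}\Big(\int_{0}^{t}F_{i}(t,\hat{s},\hat{Z}(s))ds-\int_{0}^{t_{n}}F_{i}(t_{n},\hat{s},\hat{Z}(s))ds\Big)\\
&+\Big(\int_{0}^{t}G_{0}(t,\hat{s},\hat{Z}(s))ds-\int_{0}^{t_{n}}G_{0}(t_{n},\hat{s},\hat{Z}(s))ds\Big)\\
&+\Big(\int_{0}^{t}G_{1}(t,\hat{s},\hat{Z}(s))ds-\int_{0}^{t_{n}}G_{1}(t_{n},\hat{s},\hat{Z}(s))ds\Big)\\
&+\Big(\int_{0}^{t}G_{2}(t,\hat{s},\hat{Z}(s))dW(s)-\int_{0}^{t_{n}}G_{2}(t_{n},\hat{s},\hat{Z}(s))dW(s)\Big),
\end{align*}
apply the elementary inequality $\vert\sum_{k=1}^{m}x_k\vert^{2}\le m\sum_{k=1}^{m}\vert x_k\vert^{2}$ and take expectations, reducing the problem to four terms $K_0,K_1,K_2,K_3$ of the same flavour as in the proof of Lemma~\ref{l.4}. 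This mirrors Lemma~\ref{l.4} almost verbatim, the only difference being the presence of the $\hat{s}$-argument; since Assumption (4) is a bound on $|g_j|,|f_i|$ in terms of $1+|z|$ and Lemma~\ref{l.5} gives uniform $p$-th moment bounds on $\hat Z$, the $\hat s$ plays no role in the estimates.

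For each of the drift-type terms, I would further split each integral into the ``overlap'' part on $[0,t_n]$, where the kernel difference $(t-s)^{a-1}-(t_n-s)^{a-1}$ appears, and the ``new'' part on $[t_n,t]$ with a single kernel $(t-s)^{a-1}$. On the overlap part I apply the Cauchy–Schwarz (or Hölder with $p=2$) inequality together with Lemma~\ref{l.1}, which bounds $\int_{0}^{t_n}|(t-\hat s)^{a-b}-(t_n-\hat s)^{a-b}|\,ds\le Ch^{1\wedge(1+a-b)}$; on the new part I use $\int_{t_n}^{t}(t-s)^{a-1}ds=\frac{(t-t_n)^{a}}{a}\le \frac{h^{a}}{a}$. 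Combined with the linear growth bound and Lemma~\ref{l.5}, each of $K_0$ (with $a=\alpha_i\in(0,1]$), $K_1$ (with $a=\alpha$), $K_2$ (with exponent $\alpha-\beta_1$) contributes $O(h^{2\alpha})$ after noting $t-t_n\le h$ and that $\alpha\le\alpha_i$; for the stochastic term $K_3$ I use the Burkholder–Davis–Gundy inequality to pass to the quadratic variation, then the same overlap/new split but now with the exponent $\alpha-\beta_2$ and Lemma~\ref{l.2} (the three-case estimate covering $\alpha-\beta_2>1/2$, $=1/2$, $<1/2$), exactly as in the $K_3$ computation of Lemma~\ref{l.4}; in the borderline case $\alpha-\beta_2=1/2$ one sets $\varepsilon=2-2\alpha\in(0,1)$ to recover the clean exponent. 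In every case the worst power that survives is $h^{2\alpha}$ (observe $2\alpha\le 2\alpha_i$, $2\alpha\le 2(1\wedge(1+\alpha-\beta_1))$ since $\beta_1<1$, and $2\alpha\le 2\wedge(1+2(\alpha-\beta_2))$ since $\beta_2<1/2$), so summing the four contributions gives $E\big[\vert Z(t)-\hat{Z}(t)\vert^{2}\big]\le Ch^{2\alpha}$ with $C$ independent of $h$.

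The main obstacle I anticipate is not conceptual but bookkeeping: one must carefully verify that the minimum of all the exponents arising from Lemmas~\ref{l.1} and~\ref{l.2} across the four terms really is $2\alpha$, in particular handling the $\alpha-\beta_2=\tfrac12$ borderline case in $K_3$ correctly via the $\varepsilon=2-2\alpha$ trick so that no loss below $h^{2\alpha}$ occurs, and ensuring the constants genuinely do not depend on $h$ (they may depend on $T,\alpha,\alpha_i,\beta_1,\beta_2,\varepsilon,p$ and on the moment bound from Lemma~\ref{l.5}). Since $p=2$ here, Hölder is replaced throughout by Cauchy–Schwarz, and the ``$\frac{p-2}{2}$''-type powers collapse harmlessly. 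Once these exponent computations are in place, the result follows immediately.
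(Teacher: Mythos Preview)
Your proposal is correct and follows essentially the same route as the paper: fix $t\in[t_{n},t_{n+1})$, split $Z(t)-Z(t_{n})$ into the four integral differences (the paper calls them $\hat K_{0},\hat K_{1},\hat K_{2},\hat K_{3}$), decompose each into an overlap piece on $[0,t_{n}]$ and a new piece on $[t_{n},t]$, and then apply Lemmas~\ref{l.1}, \ref{l.2}, \ref{l.5}, the linear growth bound, and the $\varepsilon=2-2\alpha$ trick for the borderline $\alpha-\beta_{2}=\tfrac12$ case. One small point: for the $G_{1}$ and $G_{2}$ terms the change from $t$ to $t_{n}$ also perturbs the first argument of $g_{1},g_{2}$ inside the inner integral, so you will need Assumption~(1) (Lipschitz in the first variable) in addition to Assumption~(4), exactly as the paper does in its proof despite the lemma's hypothesis listing only Assumption~(4).
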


\begin{proof} For every $t\in [0,T]$, there exists a unique integer $n$ such that $t\in [t_{n},t_{n+1})$ and $\hat{Z}(t)=Z(t_{n})$. Then, it follows from (\ref{17}) that
\begin{align*}
&	E\big[\vert Z(t)-\hat{Z}(t)\vert^{2}\big]=	E\big[\vert Z(t)-Z(t_{n})\vert^{2}\big]\\
	\leq&(n+3)\bigg\lbrace \sum_{i=1}^{n}E\Big[ \Big\vert \int_{0}^{t}F_{i}(t,\hat{s},\hat{Z}(s))ds-\int_{0}^{t_{n}}F_{i}(t_{n},\hat{s},\hat{Z}(s))ds\Big\vert^{2}\Big]\\
	+&E\Big[ \Big\vert \int_{0}^{t}G_{0}(t,\hat{s},\hat{Z}(s))ds-\int_{0}^{t_{n}}G_{0}(t_{n},\hat{s},\hat{Z}(s))ds\Big\vert^{2}\Big]\\
	+&E\Big[ \Big\vert \int_{0}^{t}G_{1}(t,\hat{s},\hat{Z}(s))ds-\int_{0}^{t_{n}}G_{1}(t_{n},\hat{s},\hat{Z}(s))ds\Big\vert^{2}\Big]\\
	+&E\Big[ \Big\vert \int_{0}^{t}G_{2}(t,\hat{s},\hat{Z}(s))dW(s)-\int_{0}^{t_{n}}G_{2}(t_{n},\hat{s},\hat{Z}(s))dW(s)\Big\vert^{2}\Big]\bigg\rbrace\\
	=&(n+3)\big\lbrace\hat{K}_{0}+\hat{K}_{1}+\hat{K}_{2}+\hat{K}_{3}\big\rbrace.
\end{align*}	
Imposing Hölder’s inequality and Assumptions 1, 4 as well as  Lemmas 1 and 5 to get
\begin{align*}
	\hat{K}_{0}\leq&2\sum_{i=1}^{n}\bigg\lbrace E\Big[\Big\vert \int_{0}^{t_{n}} \big((t-\hat{s})^{\alpha_{i}-1}-(t_{n}-\hat{s})^{\alpha_{i}-1}\big)f_{i}(\hat{s},\hat{Z}(s))ds\Big\vert^{2}\Big]\\
	+&E\Big[\Big\vert \int_{t_{n}}^{t}(t-\hat{s})^{\alpha_{i}-1}f_{i}(\hat{s},\hat{Z}(s))ds\Big\vert^{2}\Big]\\
	\leq& C\sum_{i=1}^{n} \bigg\lbrace \int_{0}^{t_{n}} \vert (t-\hat{s})^{\alpha_{i}-1}-(t_{n}-\hat{s})^{\alpha_{i}-1}\vert ds\\
	&\int_{0}^{t_{n}} \vert (t-\hat{s})^{\alpha_{i}-1}-(t_{n}-\hat{s})^{\alpha_{i}-1}\vert\big(1+E\big[\vert \hat{Z}(s)\vert^{2}\big]\big) ds\\
	+&\int_{t_{n}}^{t}(t-s)^{\alpha_{i}-1}ds\int_{t_{n}}^{t}(t-s)^{\alpha_{i}-1}\big(1+E\big[\vert \hat{Z}(s)\vert^{2}\big]\big) ds\bigg\rbrace\\
	\leq&C\sum_{i=1}^{n}h^{2\alpha_{i}}\leq Ch^{2\alpha}.
\end{align*}
By using the similar method, we can write the followings for $\hat{K}_{1}$ and $\hat{K}_{2}$.
\begin{align*}
	\hat{K}_{1}\leq Ch^{2\alpha} \quad and\quad \hat{K}_{2}\leq C h^{2\land 2(1+\alpha-\beta_{1})}\leq Ch^{2\alpha}.
\end{align*}
Following that, we analyze the case of the expression $\hat{K}_{3}$. We can achieve the following result for $\hat{K}_{3}$ using the same idea as $K_{3}$ in lemma 2. 
\begin{align*}
	\hat{K}_{3}
\leq	\begin{cases}
	Ch^{(2-\varepsilon)},\quad \quad \quad \quad \quad if\quad \alpha-\beta_{2}=\frac{1}{2},\\
	\\
	Ch^{2\land (1+2(\alpha-\beta_{2}))},\quad otherwises.
\end{cases}
\end{align*}
If $\alpha - \beta_2 = \frac{1}{2}$, then $2-2\alpha \in (0,1)$ because $\beta_2 \in (0,\frac{1}{2})$. By setting $\varepsilon = 2-2\alpha$, we have 
\begin{align*}
\hat{K}_{3}\leq C h^{2\alpha}.
\end{align*}
If $\alpha - \beta_2 \not= \frac{1}{2}$, then the assumption $\beta_2 \in (0,\frac{1}{2})$implies
\begin{align*}
	\hat{K}_{3}\leq Ch^{2\land (1+2\alpha-2\beta_{2})}\leq C h^{2\alpha}.
\end{align*}
Then, we have 
\begin{align*}
	E\big[\vert Z(t)-\hat{Z}(t)\vert^{2}\big]\leq (n+3)\big(\hat{K}_{0}+\hat{K}_{1}+\hat{K}_{2}+\hat{K}_{3}\big)\leq Ch^{2\alpha}.
\end{align*}
\end{proof}
Below, we prove the Euler-Maruyama method's mean-square convergence theorem (\ref{17}).
\begin{theorem}\label{t.7}
The Euler-Maruyama approach $Z(t)$ stated in $(\ref{17})$ converges to the exact solution $z(t)$ in mean square sense under assumptions $1-4$, i.e.,
\begin{align*}
	\lim_{h\to 0}E\Big[\big\vert Z(t)-z(t)\big\vert^{2}\Big]=0,\quad \forall t\in [0,T].
\end{align*}
\end{theorem}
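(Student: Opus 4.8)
\textbf{Proof strategy for Theorem \ref{t.7}.} The plan is to bound $E[|Z(t)-z(t)|^2]$ by splitting the difference into two pieces via the triangle inequality: the gap $E[|Z(t)-z^N(t)|^2]$ between the two Euler--Maruyama schemes \eqref{17} and \eqref{10} (same mesh, but \eqref{17} evaluates the time variable at the left endpoint $\hat s$ while \eqref{10} does not), and the gap $E[|z^N(t)-z(t)|^2]$ between the original Euler--Maruyama approximation and the exact solution. The second piece is already essentially available: in the existence proof of Theorem \ref{t.1} it was shown that $z^N(t)$ is Cauchy in $L^p(\Omega,\mathbb R^d)$ with limit the exact solution $z(t)$, and the same estimates combined with Lemma \ref{l.4} and the weakly singular Gronwall inequality give $E[|z^N(t)-z(t)|^2]\to 0$ as $h\to 0$; so I would state that and move on.

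For the first piece I would subtract \eqref{17} from \eqref{10} term by term, obtaining four contributions coming from $F_i$, $G_0$, $G_1$, $G_2$. Each contribution further splits: one part where the integrand difference is $G_i(t,s,\hat z^N(s))-G_i(t,\hat s,\hat Z(s))$, which I bound using Assumption (1) (Hölder continuity in the time variable, giving a factor $|s-\hat s|\le h$) plus Assumption (3) or (4) to compare $\hat z^N(s)$ with $\hat Z(s)$; and another part measuring the difference $\hat z^N(s)-\hat Z(s)$ itself, which by Lemma \ref{l.6} (applied to $\hat Z$) and the analogous estimate for $\hat z^N$ is $O(h^{2\alpha})$ on each subinterval. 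Applying Hölder's inequality, the It\^o isometry (or Burkholder--Davis--Gundy) for the stochastic term, and the boundedness of moments from Lemma \ref{l.3} and Lemma \ref{l.5}, all the singular kernels $(t-s)^{\alpha_i-1}$, $(t-s)^{\alpha-1}$, $(t-s)^{2(\alpha-\beta_j)}$ are locally integrable (here the hypotheses $0<\alpha\le\alpha_i\le1$, $\beta_1\in(0,1)$, $\beta_2\in(0,1/2)$ are exactly what is needed), so I arrive at an inequality of the form
\begin{align*}
	E\big[|Z(t)-z^N(t)|^2\big]\leq C\,\varepsilon(h)+C\sum_{i=1}^{n}\int_0^t (t-s)^{\alpha_i-1}E\big[|Z(s)-z^N(s)|^2\big]\,ds+C\int_0^t (t-s)^{\alpha-1}E\big[|Z(s)-z^N(s)|^2\big]\,ds,
\end{align*}
where $\varepsilon(h)\to 0$ as $h\to 0$. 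Then the generalized multi--weakly--singular Gronwall inequality of Theorem \ref{t.6} (with $g(t)\equiv C\varepsilon(h)$, $a_i(t)\equiv b(t)\equiv C$, which are nonnegative nondecreasing continuous) yields $E[|Z(t)-z^N(t)|^2]\le C\varepsilon(h)$ on $[0,T]$, and combining the two pieces and letting $h\to 0$ finishes the proof.

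The main obstacle I anticipate is the careful handling of the $G_1$ and $G_2$ terms, where one must also track the inner time variable $v$ inside $g_j(v,s,\cdot)$ through the representation in Theorem \ref{t.5} and use Assumption (1) there; in particular for the stochastic ($G_2$) term one needs the exponent arithmetic that already appeared in Lemmas \ref{l.2} and \ref{l.6}, with the borderline case $\alpha-\beta_2=1/2$ handled by choosing $\varepsilon=2-2\alpha$ exactly as done there. A secondary point requiring care is uniformity: the constant $C$ in the Gronwall step must be shown independent of $h$ and $N$, which follows because all moment bounds (Lemmas \ref{l.3}, \ref{l.5}) and kernel integrals are uniform in $h,N$. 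Everything else is routine once the splitting is set up.
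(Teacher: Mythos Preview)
Your decomposition via the intermediate scheme $z^N(t)$ is different from the paper's direct comparison of $Z(t)$ with $z(t)$, but the more serious issue is that your argument tacitly assumes a \emph{global} Lipschitz constant. When you write ``Assumption (3) or (4) to compare $\hat z^N(s)$ with $\hat Z(s)$'' and later claim that the Gronwall constant is uniform ``because all moment bounds \dots\ are uniform in $h,N$'', you overlook that Assumption~(3) gives only a \emph{local} Lipschitz constant $K_m$ valid on $\{|z_1|\vee|z_2|\le m\}$, and Assumption~(4) is a growth condition, not a Lipschitz condition. Moment bounds of the form $E[|\hat Z(s)|^p]\le C$ do not confine $\hat Z(s)$ to a bounded set, so you cannot plug a single constant $K$ into the Gronwall step. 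The resulting inequality would carry a random, unbounded Lipschitz constant, and the weakly singular Gronwall argument collapses.

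The paper addresses exactly this point by a localization argument: it introduces stopping times $\theta_m,\rho_m$ and $\gamma_m=\theta_m\wedge\rho_m$, bounds $E[|e(t\wedge\gamma_m)|^2]$ using $K_m$ (now legitimate, since both processes are bounded by $m$ up to $\gamma_m$), and controls the remainder $E[|e(t)|^2\,1_{\{\gamma_m\le T\}}]$ via Young's inequality and the tail estimate $P(\gamma_m\le T)\le 2M/m^p$ coming from the $p$th-moment bounds. One then chooses $\delta$, then $m$, then $h$ to make each piece smaller than $\varepsilon/3$. This three-parameter $\varepsilon/3$ argument is the missing ingredient in your plan; without it, your proof only establishes Theorem~\ref{t.7} under the stronger global Lipschitz hypothesis of Remark~\ref{r.1} (which is essentially the content of Theorem~\ref{t.8}). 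Your route through $z^N$ could be made to work, but you would still need the same stopping-time localization on \emph{both} pieces $E[|Z-z^N|^2]$ and $E[|z^N-z|^2]$, at which point the detour through $z^N$ no longer buys anything over the paper's direct comparison.
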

\begin{proof}
	For simplicity, let the error $e(t) = Z(t)-z(t)$ and for every integer $m \geq 1$,
	\begin{align*}
		\theta_{m}=\inf \lbrace t\geq 0: \vert Z(t)\vert \geq m\rbrace,\quad \gamma_{m}=\theta_{m}\land \rho_{m},
	\end{align*}
where $\rho_{m}$ has been given in (\ref{13}). Resembling to (\ref{16}), it is follows from Lemma 5 that for each $\delta>0$,
\begin{align}\label{19}
	E\big[\vert e(t)\vert^{2}\big]\leq E\big[\vert e(t\land \gamma_{m})\vert^{2}\big]+\frac{2^{p+1}\delta M}{p}+\frac{2(p-2)M}{p \delta^{\frac{2}{p-2}}m^{p}},
\end{align}
Here and throughout the rest of the proof, $M$ refers to a positive constant that is independent of $m, \delta$, and $h$. We then concentrate on estimating the term on the right side of the equation (\ref{19}). Due to Hölder's inequality, formulas (\ref{4}) and (\ref{17}) imply that
	\begin{align}\label{20}
		E\Big[\vert e(t\land \gamma_{m})\vert^{2}\Big]\leq&(2n+6)E\bigg\lbrace \sum_{i=1}^{n}\bigg\vert \int_{0}^{t\land \gamma_{m}} \Big(F_{i}(t\land \gamma_{m}, \hat{s}, \hat{Z}(s))-F_{i}(t\land \gamma_{m}, s, \hat{Z}(s))\Big)ds\bigg\vert^{2}\nonumber\\
		+&\sum_{i=1}^{n}\bigg\vert \int_{0}^{t\land \gamma_{m}} \Big(F_{i}(t\land \gamma_{m}, s, \hat{Z}(s))-F_{i}(t\land \gamma_{m}, s, z(s))\Big)ds\bigg\vert^{2}\nonumber\\
		+&\bigg\vert \int_{0}^{t\land \gamma_{m}} \Big(G_{0}(t\land \gamma_{m}, \hat{s}, \hat{Z}(s))-G_{0}(t\land \gamma_{m}, s, \hat{Z}(s))\Big)ds\bigg\vert^{2}\nonumber\\
		+&\bigg\vert \int_{0}^{t\land \gamma_{m}}\Big( G_{0}(t\land \gamma_{m}, s, \hat{Z}(s))-G_{0}(t\land \gamma_{m}, s, z(s))\Big)ds\bigg\vert^{2}\nonumber\\
		+&\bigg\vert \int_{0}^{t\land \gamma_{m}} \Big(G_{1}(t\land \gamma_{m}, \hat{s}, \hat{Z}(s))-G_{1}(t\land \gamma_{m}, s, \hat{Z}(s))\Big)ds\bigg\vert^{2}\nonumber\\
		+&\bigg\vert \int_{0}^{t\land \gamma_{m}} \Big(G_{1}(t\land \gamma_{m}, s, \hat{Z}(s))-G_{1}(t\land \gamma_{m}, s, z(s))\Big)ds\bigg\vert^{2}\nonumber\\
		+&\bigg\vert \int_{0}^{t\land \gamma_{m}} \Big(G_{2}(t\land \gamma_{m}, \hat{s}, \hat{Z}(s))-G_{2}(t\land \gamma_{m}, s, \hat{Z}(s))\Big)dW(s)\bigg\vert^{2}\nonumber\\
		+&\bigg\vert \int_{0}^{t\land \gamma_{m}} \Big(G_{2}(t\land \gamma_{m}, s, \hat{Z}(s))-G_{2}(t\land \gamma_{m}, s, z(s))\Big)dW(s)\bigg\vert^{2}\bigg\rbrace\nonumber\\
		=:&(2n+6)\Big\lbrace H_{1}+H_{2}+H_{3}+H_{4}+H_{5}+H_{6}+H_{7}+H_{8}\Big\rbrace.
	\end{align}
Using Lemmas 1,2 and, 5,  Assumptions 1, 2, 4, and Cauchy–Schwarz's inequality, it can be inferred from the derivation steps of $\hat{K}_{0}$, $\hat{K}_{1}$, $\hat{K}_{2}$, and $\hat{K}_{3}$ (refer to Lemma 6's proof) that
\begin{align}\label{21}
	H_{1}+H_{3}+H_{5}+H_{7}\leq Ch^{2\alpha}.
\end{align}
The derivation steps in (\ref{11}) imply, via Cauchy–Schwarz’s inequality, Assumption 3, and Ito isometry, that
\begin{align}\label{22}
	H_{2}+H_{4}+H_{6}+H_{8}\leq&C^{\prime} K^{2}_{m} \bigg\lbrace \sum_{i=1}^{n} \int_{0}^{t\land \gamma_{m}}(t\land \gamma_{m}-s)^{\alpha_{i}-1}E\Big[\vert \hat{Z}(s)-Z(s)\vert^{2}+\vert e(s\land \gamma_{m})\vert^{2}\Big]ds\nonumber\\
	+&\int_{0}^{t\land \gamma_{m}}(t\land \gamma_{m}-s)^{\alpha-1}E\Big[\vert \hat{Z}(s)-Z(s)\vert^{2}+\vert e(s\land \gamma_{m})\vert^{2}\Big]ds\bigg\rbrace
\end{align}
Suppose that $0<\alpha<\alpha_{1}<\dots<\alpha_{n}\leq 1$. Then we will get
\begin{align}\label{22}
	H_{2}+H_{4}+H_{6}+H_{8}\leq&C^{\prime} K^{2}_{m} \bigg\lbrace n\max\{1, T^{\alpha_{n}-\alpha}\} \int_{0}^{t\land \gamma_{m}}(t\land \gamma_{m}-s)^{\alpha-1}E\Big[\vert \hat{Z}(s)-Z(s)\vert^{2}+\vert e(s\land \gamma_{m})\vert^{2}\Big]ds\nonumber\\
	+&\int_{0}^{t\land \gamma_{m}}(t\land \gamma_{m}-s)^{\alpha-1}E\Big[\vert \hat{Z}(s)-Z(s)\vert^{2}+\vert e(s\land \gamma_{m})\vert^{2}\Big]ds\bigg\rbrace\nonumber\\
	\leq &C K^{2}_{m} \int_{0}^{t\land \gamma_{m}}(t\land \gamma_{m}-s)^{\alpha-1}E\Big[\vert \hat{Z}(s)-Z(s)\vert^{2}+\vert e(s\land \gamma_{m})\vert^{2}\Big]ds
\end{align}
Now that we have combined (\cite{20}–\cite{22}), we can use Lemma 6 and weakly singular Gronwall's inequality to get at
\begin{align}\label{23}
	E\Big[\vert e(t\land \gamma_{m})\vert^{2}\Big]\leq C_{m}h^{2\alpha}
\end{align}
where the  $C_{m}>0$ is a constant depends on $m$, but not on $h$ and $\delta$. Inserting (\ref{23}) into (\ref{19}) arrive
\begin{align*}
		E\big[\vert e(t)\vert^{2}\big]\leq C_{m}h^{2\alpha}+\frac{2^{p+1}\delta M}{p}+\frac{2(p-2)M}{p \delta^{\frac{2}{p-2}}m^{p}},
\end{align*}
Thus, for every given $\varepsilon>0$, one can choose $\delta$ and $m$ such that 
\begin{align*}
	\frac{2^{p+1}\delta M}{p}<\frac{\varepsilon}{3},\quad \frac{2(p-2)M}{p \delta^{\frac{2}{p-2}}m^{p}} <\frac{\varepsilon}{3},
\end{align*}
and then $h$ can be taken sufficiently small such that $C_{m}h^{2\alpha}<\frac{\varepsilon}{3}$. As a consequence,
Consequently,
\begin{align*}
	\lim_{h\to 0}E\big[\vert e(t)\vert^{2}\big]0,\quad \forall\in [0,T],
\end{align*}
The proof is complete.
\end{proof}
\subsection{Convergence rate.}
We provide the following theorem since the numerical scheme's convergence rate can demonstrate its computing efficiency.
\begin{theorem}\label{t.8}
	Assuming that the global Lipschitz condition (2) and the assumptions of  Theorem \ref{t.7} are held, there is a positive constant $C$ independent of $h$ such that 
	\begin{align*}
		\bigg[E\big[\vert Z(t)-z(t)\vert^{2}\big]\bigg]^{\frac{1}{2}}\leq Ch^{\alpha},\quad \forall t\in [0,T].
	\end{align*}
\end{theorem}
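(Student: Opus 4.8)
The plan is to dispense with the stopping-time localization of the proof of Theorem \ref{t.7}: since the global Lipschitz condition (\ref{4}) is now assumed, one may work directly with the error $e(t)=Z(t)-z(t)$ on the whole of $[0,T]$, and the estimates become clean enough to feed into the generalized Gronwall inequality.

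\textbf{Step 1 (error decomposition).} Subtract the integral representation (\ref{789}) of the exact solution $z(t)$ from the scheme (\ref{17}) for $Z(t)$. For each of the four kernels $F_i,G_0,G_1,G_2$ insert the intermediate argument (for instance $F_i(t,s,\hat Z(s))$ between $F_i(t,\hat s,\hat Z(s))$ and $F_i(t,s,z(s))$), then apply H\"older's / Cauchy--Schwarz's inequality and It\^o's isometry. This yields the analogue of (\ref{20}): $E[|e(t)|^2]$ is bounded by a constant times the sum of eight terms $H_1,\dots,H_8$, where $H_1,H_3,H_5,H_7$ are \emph{quadrature} terms (value at $\hat s=t_n$ versus value at $s$, and endpoint $t_n$ versus $t$) and $H_2,H_4,H_6,H_8$ are \emph{propagation} terms (argument $\hat Z(s)$ versus $z(s)$). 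The only structural change from (\ref{20}) is that the stopping time $\gamma_m$ is absent.

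\textbf{Step 2 (quadrature terms).} Using the H\"older-continuity Assumptions (1) and (2), the weakly singular kernel estimates of Lemma \ref{l.1} and Lemma \ref{l.2}, the linear growth Assumption (4), the moment bound $E[|\hat Z(s)|^2]\le C$ of Lemma \ref{l.5}, and the Burkholder--Davis--Gundy inequality for the stochastic term, one repeats verbatim the estimates of $\hat K_0,\hat K_1,\hat K_2,\hat K_3$ from the proof of Lemma \ref{l.6} to obtain
\[
H_1+H_3+H_5+H_7\le C h^{2\alpha}.
\]
The delicate piece is $H_7$, where $\alpha-\beta_2\in(-\tfrac12,1)$ may be negative; one invokes the case analysis of Lemma \ref{l.2} and then uses $\beta_2\in(0,\tfrac12)$ to absorb the exponent into $h^{2\alpha}$, taking $\varepsilon=2-2\alpha$ in the borderline case $\alpha-\beta_2=\tfrac12$.

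\textbf{Step 3 (propagation terms and Gronwall).} The global Lipschitz condition (\ref{4}) bounds each propagation term by $C\int_0^t(t-s)^{\gamma-1}E[|\hat Z(s)-z(s)|^2]\,ds$ with $\gamma\in\{\alpha_i,\alpha\}$ (for $G_1,G_2$ one also uses $\alpha-\beta_j>-\tfrac12$, so the exponent is integrable). Then $|\hat Z(s)-z(s)|\le|\hat Z(s)-Z(s)|+|e(s)|$ together with Lemma \ref{l.6}, namely $E[|\hat Z(s)-Z(s)|^2]\le Ch^{2\alpha}$, gives
\[
H_2+H_4+H_6+H_8\le C h^{2\alpha}+C\sum_{i=1}^{n}\int_0^t(t-s)^{\alpha_i-1}E[|e(s)|^2]\,ds+C\int_0^t(t-s)^{\alpha-1}E[|e(s)|^2]\,ds.
\]
Collecting Steps 2--3, $u(t):=E[|e(t)|^2]$ satisfies an inequality of the exact form (\ref{O}) with free term $g(t)\equiv Ch^{2\alpha}$ and constant coefficients. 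Applying the generalized Gronwall inequality with a multi weak singularity (Theorem \ref{t.6})---or directly Corollary \ref{c.1}, since $g$ is constant and hence non-decreasing---gives $u(t)\le Ch^{2\alpha}\,\Phi(T)$, where $\Phi(T)$ is the value at $t=T$ of the double series in (\ref{369}); since that series converges uniformly on $[0,T]$ (its operator has spectral radius zero, as shown in the proof of Theorem \ref{t.6}), one concludes $E[|e(t)|^2]\le Ch^{2\alpha}$ on $[0,T]$, and taking square roots yields $\big(E[|Z(t)-z(t)|^2]\big)^{1/2}\le Ch^{\alpha}$.

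\textbf{Main obstacle.} The bulk of the technical effort is the quadrature estimate for the stochastic term $H_7$, because of the possibly-negative exponent $\alpha-\beta_2$ and the need to combine Burkholder--Davis--Gundy with the sharp weakly singular kernel bounds of Lemma \ref{l.2}; however, this is essentially the computation already carried out for $\hat K_3$ in Lemma \ref{l.6} and $K_3$ in Lemma \ref{l.4}, so it may largely be cited. The only genuinely new point is to verify that feeding a constant free term $g\equiv Ch^{2\alpha}$ into Theorem \ref{t.6} preserves the rate $h^{2\alpha}$, which holds precisely because the resulting Gronwall factor is independent of $h$.
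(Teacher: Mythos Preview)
Your proposal is correct and follows essentially the same route as the paper's own proof: the same eight-term decomposition (quadrature vs.\ propagation) without stopping times, the quadrature bound $H_1+H_3+H_5+H_7\le Ch^{2\alpha}$ via Lemmas \ref{l.1}, \ref{l.2}, \ref{l.5} and the $\hat K_j$ computations of Lemma \ref{l.6}, the propagation bound via the global Lipschitz condition, Lemma \ref{l.6} and the triangle inequality, and then the multi-weakly singular Gronwall inequality (Theorem \ref{t.6}) to close. Your additional remark that the Gronwall factor is independent of $h$ makes explicit a point the paper leaves implicit.
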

\begin{proof}
	It follows from (\ref{789}) and (\ref{17}) that
	\begin{align}\label{24}
		E\Big[\vert Z(t)-z(t)\vert^{2}\Big]\leq&(2n+6)E\bigg\lbrace \sum_{i=1}^{n}\bigg\vert \int_{0}^{t} \Big(F_{i}(t, \hat{s}, \hat{Z}(s))-F_{i}(t, s, \hat{Z}(s))\Big)ds\bigg\vert^{2}\nonumber\\
		+&\sum_{i=1}^{n}\bigg\vert \int_{0}^{t} \Big(F_{i}(t, s, \hat{Z}(s))-F_{i}(t, s, z(s))\Big)ds\bigg\vert^{2}\nonumber\\
		+&\bigg\vert \int_{0}^{t} \Big(G_{0}(t, \hat{s}, \hat{Z}(s))-G_{0}(t, s, \hat{Z}(s))\Big)ds\bigg\vert^{2}\nonumber\\
		+&\bigg\vert \int_{0}^{t}\Big( G_{0}(t, s, \hat{Z}(s))-G_{0}(t, s, z(s))\Big)ds\bigg\vert^{2}\nonumber\\
		+&\bigg\vert \int_{0}^{t} \Big(G_{1}(t, \hat{s}, \hat{Z}(s))-G_{1}(t, s, \hat{Z}(s))\Big)ds\bigg\vert^{2}\nonumber\\
		+&\bigg\vert \int_{0}^{t} \Big(G_{1}(t, s, \hat{Z}(s))-G_{1}(t, s, z(s))\Big)ds\bigg\vert^{2}\nonumber\\
		+&\bigg\vert \int_{0}^{t} \Big(G_{2}(t, \hat{s}, \hat{Z}(s))-G_{2}(t, s, \hat{Z}(s))\Big)dW(s)\bigg\vert^{2}\nonumber\\
		+&\bigg\vert \int_{0}^{t} \Big(G_{2}(t, s, \hat{Z}(s))-G_{2}(t, s, z(s))\Big)dW(s)\bigg\vert^{2}\bigg\rbrace\nonumber\\
		=:&(2n+6)\Big\lbrace \hat{H}_{1}+\hat{H}_{2}+\hat{H}_{3}+\hat{H}_{4}+\hat{H}_{5}+\hat{H}_{6}+\hat{H}_{7}+\hat{H}_{8}\Big\rbrace.
	\end{align}
Applying Lemmas 1,2 and 5,  Assumptions 1, 2, 4, and Cauchy–Schwarz's inequality, it can be inferred from the derivation steps of $\hat{K}_{0}$, $\hat{K}_{1}$, $\hat{K}_{2}$, and $\hat{K}_{3}$ (refer to Lemma 6's proof) that
\begin{align}\label{25}
	H_{1}+H_{3}+H_{5}+H_{7}\leq Ch^{2\alpha}.
\end{align}
Using the triangle inequality and the global Lipschitz condition (\ref{4}) along with Cauchy–Schwarz's inequality and Ito isometry, it can be deduced from the steps of (\ref{11}) deduction that
\begin{align}\label{26}
	H_{2}+H_{4}+H_{6}+H_{8}\leq&C^{\prime} K^{2} \bigg\lbrace \sum_{i=1}^{n} \int_{0}^{t}(t-s)^{\alpha_{i}-1}E\Big[\vert \hat{Z}(s)-Z(s)\vert^{2}+\vert Z(t)-z(t)\vert^{2}\Big]ds\nonumber\\
	+&\int_{0}^{t}(t-s)^{\alpha-1}E\Big[\vert \hat{Z}(s)-Z(s)\vert^{2}+\vert Z(t)-z(t)\vert^{2}\Big]ds\bigg\rbrace\nonumber\\
	\leq&C K^{2} \int_{0}^{t}(t-s)^{\alpha-1}E\Big[\vert \hat{Z}(s)-Z(s)\vert^{2}+\vert Z(t)-z(t)\vert^{2}\Big]ds
\end{align}
Finally, applying Lemma 6 and weakly singular Gronwall's inequality, along with combining (\ref{24}-\ref{26}), demonstrates that
\begin{align}
	E\Big[\vert Z(t)-z(t)\vert^{2}\Big]\leq Ch^{2\alpha}
\end{align}
The proof is complete.
\end{proof}
\section{Numerical example} In this part, we investigate the Euler-Maruyama method's convergence rate for various SFNIDEs with weakly singular kernels, as stated in (\ref{18}). We use the sample average to approximate the expectation, as in \cite{12}. More specifically, we estimate the numerical solutions' mean square error by
\begin{align*}
	e_{h,T}=\bigg(\frac{1}{8000}\sum_{i=1}^{8000} \vert Z_{h}(T,\omega_{i})-Z_{\frac{h}{2}}(T,\omega_{i})\vert^{2}\bigg)^{\frac{1}{2}},
\end{align*}
where $\omega_{i}$ denotes the ith single sample path.

\begin{example}
	Consider the $1$-dimensional SFNIDE with $r=1$
	\begin{align*}
		D^{\alpha}\bigg(z(t)-\mathcal{I}^{\alpha_{1}}\cos(tz(t))\bigg)=\cos(tz(t))+\int_{0}^{t} \frac{s\sin(z(s))}{(t-s)^{\beta_{1}}}ds+\int_{0}^{t} \frac{s\sin(z(s))}{(t-s)^{\beta_{2}}}dW(s)
	\end{align*}
for $t\in [0,1]$ and the initial value $z(0)=1$.
\begin{figure}[h]
	\centering
	\includegraphics[width=0.7\textwidth]{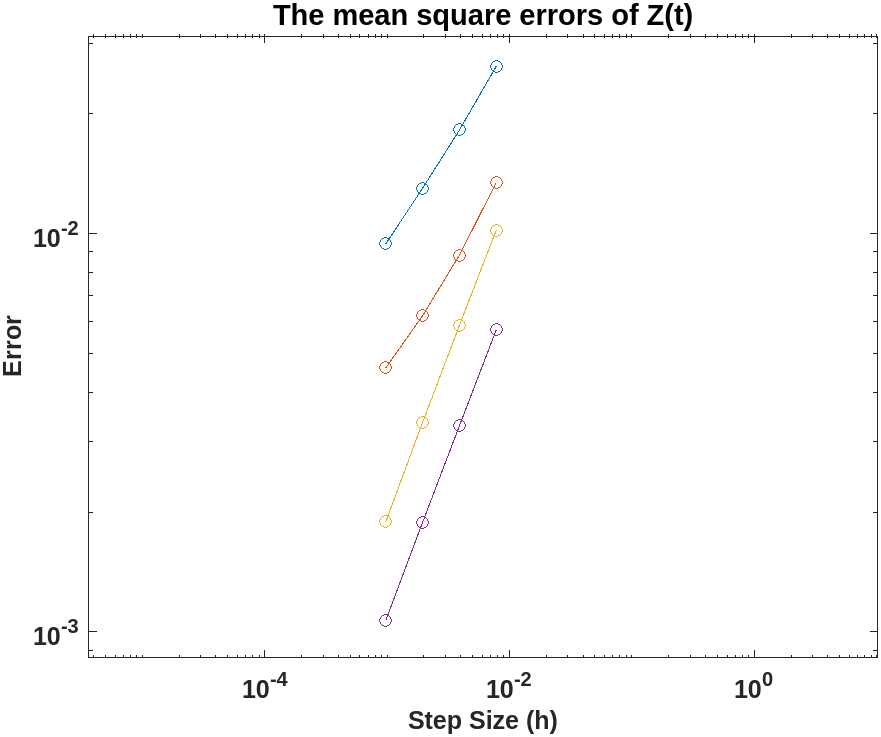}
	\caption{The mean square errors of the EM scheme (\ref{18}) for
		Example 1.\\
		$\color{blue}\boxdot$ $\alpha=0.4, \alpha_{1}=0.5, \beta_{1}=0.6, \beta_{2}=0.4$,\quad
		$\color{red}\boxdot$ $\alpha=0.4, \alpha_{1}=0.5, \beta_{1}=0.8, \beta_{2}=0.3$,\\
		$\color{yellow}\boxdot$ $\alpha=0.8, \alpha_{1}=0.9, \beta_{1}=0.6, \beta_{2}=0.4$,\quad
		$\color{purple}\boxdot$ $\alpha=0.8, \alpha_{1}=0.9, \beta_{1}=0.8, \beta_{2}=0.3$.}\label{fig1}
\end{figure}

We can verify that the functions $g_{i} (i = 0, 1, 2)$ satisfy the hypotheses of  Theorem \ref{t.7}. The computing results are shown in Figure 1. As shown in Figure 1, the
convergence rate is $\alpha$, and the arguments $\alpha$, $\alpha_{1}$, $\beta_{1}$ and $\beta_{2}$ will affect the error constant.
\end{example}
\section{Conclusion}
In summary, this manuscript establishes the existence, uniqueness, and continuous dependence of solutions to nonlinear stochastic fractional neutral integro-differential equations with weakly singular Abel-type kernels, under local Lipschitz and linear growth conditions. Additionally, the Euler-Maruyama method is developed and proven to exhibit strong convergence, in accordance with the well-posedness conditions. Furthermore, the accurate convergence rate of the method is determined under global Lipschitz and linear growth conditions, providing valuable insights into its efficacy for numerical approximation.

\bigskip
\textbf{Acknowledgements} The authors are extremely grateful to Professor Xinjie Dai for providing assistance with the numerical example by sharing his Matlab codes.


\begin{thebibliography}{99}                                                                                               %
	
	\bibitem{1} K. Diethelm, The Analysis of Fractional Differential Equations: An Application-Oriented Exposition Using Differential Operators of Caputo Type, Springer, 2010.
	\bibitem{2} G. Da Prato, J. Zabczyk, Stochastic Equations in Infinite Dimensions, Cambridge University
	Press, 2014.
	\bibitem{3} P.E. Kloeden, E. Platen, Numerical Solution of Stochastic Differential Equations, Springer,1992.
	\bibitem{4} X. Mao, Stochastic Differential Equations and Applications, Elsevier, 2008.
	\bibitem{5} H. Ye, J. Gao, Y. Ding, A generalized Gronwall inequality and its application to a fractional differential equation, J. Math. Anal. Appl., 328 (2007), 1075–1081
	\bibitem{6} Dai, X., Xiao, A., \& Bu, W. (2019). Stochastic fractional integro-differential equations with weakly singular kernels: Well-posedness and Euler--Maruyama approximation. arXiv preprint arXiv:1901.10333.
	\bibitem{7} S.A. McKinley, H.D. Nguyen, Anomalous diffusion and the generalized Langevin equation, SIAM J. Math. Anal., 50 (2018), 5119–5160.
	\bibitem{8} G. Zhang, R. Zhu, Runge–Kutta convolution quadrature methods with convergence
	and stability analysis for nonlinear singular fractional integro-differential equations,	Commun. Nonlinear Sci. Numer. Simul., 84 (2020), 105132.
	\bibitem{9} R. Cont, P. Tankov, Financial Modelling with Jump Processes, Chapman and Hall/CRC, 2004.
	\bibitem{10} Asadzade, J. A., Gasimov, J. J., \& Mahmudov, N. I. (2023). Delayed Gronwall inequality with weakly singular kernel. arXiv preprint arXiv:2306.11131.
	\bibitem{11}  H. Liang, Z. Yang, J. Gao, Strong superconvergence of the Euler–Maruyama method for linear stochastic Volterra integral equations, J. Comput. Appl. Math., 317 (2017), 447–457.
	\bibitem{12} W. Cao, Z. Zhang, G.E. Karniadakis, Numerical methods for stochastic delay differential equations via the Wong–Zakai approximation, SIAM J. Sci. Comput., 37 (2015), A295–A318.
	\bibitem{13} ] A.A. Badr, H.S. El-Hoety, Monte–Carlo Galerkin approximation of fractional stochastic integro-differential equation, Math. Probl. Eng., 2012 (2012), 709106.
	\bibitem{14} Hadid, S. B., and Luchko, Y. F. (1996). An operational method for solving fractional differential equations of an arbitrary real order. Panamer. Math. J, 6(1), 57-73.
	\bibitem{15}Ahmadova, A., and Mahmudov, N. I. (2020). Existence and uniqueness results for a class of fractional stochastic neutral differential equations. Chaos, Solitons and Fractals, 139, 110253.
	\bibitem{16} V.E. Tarasov, Fractional integro-differential equations for electromagnetic waves in dielectric	media, Theoret. and Math. Phys., 158 (2009), 355–359.
	\bibitem{17} F.M. Scudo, Vito Volterra and theoretical ecology, Theoret. Population Biol., 2 (1971), 1–23
	\bibitem{18} Mao, X. (2016). Convergence rates of the truncated Euler–Maruyama method for stochastic differential equations. Journal of Computational and Applied Mathematics, 296, 362-375.
	\bibitem{19} Gasimov, J. J., Asadzade, J. A., \& Mahmudov, N. I. (2023). Pontryagin maximum principle for fractional delay differential equations and controlled weakly singular Volterra delay integral equations. arXiv preprint arXiv:2309.14007.
	 \bibitem{20}M. Asgari, Block pulse approximation of fractional stochastic integro-differential equation,Commun. Numer. Anal., 2014 (2014), 1–7.
	\bibitem{21} A. Aghajani, Y. Jalilian, J. Trujillo, On the existence of solutions of fractional integrodifferential equations, Fract. Calc. Appl. Anal., 15 (2012), 44
	 \bibitem{22} V. Lakshmikantham, M. Rama Mohana Rao, Theory of Integro-Differential Equations, CRC
	Press, 1995.
	\bibitem{23} J.J. Levin, J.A. Nohel, On a system of integrodifferential equations occuring in reactor dynamics, J. Math. Mech., 9 (1960), 347–368.
	\bibitem{24} L. Li, J.-G. Liu, J. Lu, Fractional stochastic differential equations satisfying fluctuationdissipation theorem, J. Stat. Phys., 169 (2017), 316–339.
	\bibitem{25} Y. Li, Y. Wang, The existence and asymptotic behavior of solutions to fractional stochastic
	evolution equations with infinite delay, J. Differential Equations, 266 (2019), 3514–3558.
	\bibitem{26} H. Liang, Z. Yang, J. Gao, Strong superconvergence of the Euler–Maruyama method for linear stochastic Volterra integral equations, J. Comput. Appl. Math., 317 (2017), 447–457.
	\bibitem{27} M. Maleki, M.T. Kajani, Numerical approximations for Volterra’s population growth model
	with fractional order via a multi-domain pseudospectral method, Appl. Math. Model., 39 (2015), 4300–4308.
	\bibitem{28} P.T. Anh, T.S. Doan, P.T. Huong, A variation of constant formula for Caputo fractional stochastic differential equations, Statist. Probab. Lett., 145 (2019), 351–358.
	\bibitem{29} F. Mirzaee, N. Samadyar, On the numerical solution of fractional stochastic integrodifferential equations via meshless discrete collocation method based on radial basis functions, Eng. Anal. Bound. Elem., 100 (2019), 246–255.
	\bibitem{30} F. Mohammadi, Efficient Galerkin solution of stochastic fractional differential equations using
	second kind Chebyshev wavelets, Bol. Soc. Parana. Mat., 35 (2015), 195–215.
	\bibitem{31} S.M. Momani, Local and global existence theorems on fractional integro-differential equations, J. Fract. Calc., 18 (2000), 81–86.
	\bibitem{32} A.N.V. Rao, C.P. Tsokos, On the existence, uniqueness, and stability behavior of
	a random solution to a nonlinear perturbed stochastic integro-differential equation, Information and Control, 27 (1975), 61–74.
	\bibitem{33} Ahmadova, A., and Mahmudov, N. I. (2021). Ulam–Hyers stability of Caputo type fractional stochastic neutral differential equations. Statistics and Probability Letters, 168, 108949.
	\bibitem{34} Gorenflo, R., Iskenderov, A., and Luchko, Y. (2000). Mapping between solutions of fractional diffusion-wave equations. Fractional Calculus and Applied Analysis, 3(1), 75-86.
\bibitem{35} Z. Taheri, S. Javadi, E. Babolian, Numerical solution of stochastic fractional integrodifferential equation by the spectral collocation method, J. Comput. Appl. Math., 321 (2017),336–347.
	\bibitem{36} Ahmadova, A., and Mahmudov, N. I. (2021). Strong convergence of a Euler–Maruyama method for fractional stochastic Langevin equations. Mathematics and Computers in Simulation, 190, 429-448.
	\bibitem{37} K.G. TeBeest, Classroom Note: Numerical and analytical solutions of Volterra’s population model, SIAM Rev., 39 (1997), 484–493.
	\bibitem{38} D.N. Tien, Fractional stochastic differential equations with applications to finance,
	J. Math. Anal. Appl., 397 (2013), 334–348.
	\bibitem{39} H.T. Tuan, On the asymptotic behavior of solutions to time-fractional elliptic equations driven
	by a multiplicative white noise, Discrete Contin. Dyn. Syst. Ser. B, 26 (2021), 1749–1762.
	\bibitem{40} Z. Wang, Existence and uniqueness of solutions to stochastic Volterra equations with singular
	kernels and non-Lipschitz coefficients, Statist. Probab. Lett., 78 (2008), 1062–1071.
	\bibitem{41} X. Dai, W. Bu, A. Xiao, Well-posedness and EM approximations for non-Lipschitz stochastic fractional integro-differential equations, J. Comput. Appl. Math., 356 (2019), 377–390.
	\bibitem{42} Z. Yang, H. Yang, Z. Yao, Strong convergence analysis for Volterra integro-differential equations with fractional Brownian motions, J. Comput. Appl. Math., 383 (2021), 113156.
	\bibitem{43} Ş. Yüzbaşı, A numerical approximation for Volterra’s population growth model with fractional order, Appl. Math. Model., 37 (2013), 3216–3227.
	\bibitem{44} G. Zou, Numerical solutions to time-fractional stochastic partial differential equations, Numer. Algorithms, 82 (2019), 553–571.
	\bibitem{45} J.C. Pedjeu, G.S. Ladde, Stochastic fractional differential equations: Modeling, method and	analysis, Chaos Solitons Fractals, 45 (2012), 279–293.
	\bibitem{46} T.S. Doan, P.T. Huong, P.E. Kloeden, A.M. Vu, Euler–Maruyama scheme for Caputo stochastic fractional differential equations, J. Comput. Appl. Math., 380 (2020), 112989.
	\bibitem{47} D.J. Higham, X. Mao, A.M. Stuart, Strong convergence of Euler-type methods for nonlinear
	stochastic differential equations, SIAM J. Numer. Anal., 40 (2002), 1041–1063.
	\bibitem{48} M. Hutzenthaler, A. Jentzen, P.E. Kloeden, Strong and weak divergence in finite time of Euler’s method for stochastic differential equations with non-globally Lipschitz continuous coefficients, Proc. R. Soc. Lond. Ser. A Math. Phys. Eng. Sci., 467 (2011), 1563–1576.
	\bibitem{49} Ahmadova, A., and Mahmudov, N. (2021). Asymptotic stability analysis of Riemann-Liouville fractional stochastic neutral differential equations. arXiv preprint arXiv:2109.11493.
	\bibitem{50} Li, Y., Chen, Y. Q., Podlubny, I. (2010). Stability of fractional-order nonlinear dynamic
	systems: Lyapunov direct method and generalized Mittag-Leffler stability. Comput. Math. Appl. 59(5):1810–1821. DOI: 10.1016/j.camwa.2009.08.019.
	 \bibitem{51} Podlubny, I. (1998). Fractional differential equations: an introduction to fractional derivatives, fractional differential equations, to methods of their solution and some of their applications. Elsevier.
\end{thebibliography}
\end{document}